\renewcommand{\epsilon}{\varepsilon}
\renewcommand{\phi}{\varphi}
\renewcommand{\o}{\mathrm{o}}
\newcommand{\CC}{{\mathbb C}}
\newcommand{\RR}{{\mathbb R}}
\newcommand{\QQ}{{\mathbb{Q}}}
\newcommand{\NN}{{\mathbb{N}}}
\newcommand{\ZZ}{{\mathbb{Z}}}
 \DeclareMathOperator{\E}{\mathbb{E}}      
 \DeclareMathOperator{\cov}{cov}             
\DeclareMathOperator{\Var}{\mathrm{Var}}      
\DeclareMathOperator{\Tr}{Tr}
\newcommand{\tra}[1]{\,{\vphantom{#1}}^{\mathrm{t}}\hspace{0mm}{#1}}
\newcommand{\1}{\mathds{1}}
 \newcommand{\dd}{{\mathrm{d}}}            
 \newcommand{\ii}{{\mathrm{i}}}
  \newcommand{\Id}{{\mathrm{Id}}}
\newcommand{\convlaw}{\overset{\mbox{\rm \scriptsize law}}{\longrightarrow}}
\renewcommand{\Re}{{\mathfrak{Re}}}
\renewcommand{\Im}{{\mathfrak{Im}}}
\newtheorem{thm}{Theorem}[section]
\newtheorem{cor}[thm]{Corollary}
\newtheorem{lem}[thm]{Lemma}
\newtheorem{prop}[thm]{Proposition}
\theoremstyle{definition}
\newtheorem{defn}[thm]{Definition}
\theoremstyle{remark}
\newtheorem*{rem}{Remark}
\numberwithin{equation}{section}
\date{February 9, 2009}
\begin{document}
\title[Mesoscopic fluctuations of the zeta zeros]
{Mesoscopic fluctuations of the zeta zeros}

\author{P. Bourgade}
 \address{Telecom ParisTech,
 46 rue Barrault, 75634 Paris Cedex 13.}
\email{bourgade@enst.fr}

\subjclass[2000]{11M06, 60F05, 15A52}
\keywords{Central limit theorem, Zeta and L-functions}

\begin{abstract}
We prove a multidimensional extension of Selberg's central limit theorem for $\log\zeta$, in which
non-trivial correlations appear.
In particular, this answers a question by Coram and Diaconis about the mesoscopic fluctuations of the
zeros of the Riemann zeta function.

Similar results are given in the context of random matrices from the unitary group.
This shows the correspondence $n \leftrightarrow \log t$ not only between the dimension of the matrix and the height
on the critical line, but also, in a local scale, for small deviations from the critical axis or the unit circle.

\end{abstract}

\maketitle

\begin{rem}
All results below hold for L-functions from the Selberg class, for
concision we state them for $\zeta$.

In this paper we talk about correlations between random variables to express the idea of dependence, which is equivalent as all the
involved variables are Gaussian.

The Vinogradov symbol, $a_n\ll
b_n$, means $a_n=O(b_n)$, and $a_n\gg b_n$ means $b_n\ll a_n$. In
this paper, we implicitly assume that, for all $n$ and $t$,
$\epsilon_n\geq 0$, $\epsilon_t\geq 0$.
\end{rem}

\section{Introduction}

\subsection{Main result}

Selberg's central limit theorem states that, if $\omega$ is uniform on
$(0,1)$, then
\begin{equation}\label{eqn:SelbergCLT}
\frac{\log \zeta\left(\frac{1}{2}+\ii \omega
t\right)}{\sqrt{\log\log t}}\convlaw Y,
\end{equation}
as $t\to\infty$, $Y$ being a standard complex normal
variable (see paragraph \ref{subsection:Definitions} below for precise definitions of $\log\zeta$ and complex normal variables).
This result has been extended in two distinct directions, both relying on Selberg's original method.

First similar central limit theorems appear in  Tsang's thesis \cite{Tsang}
far away from the critical axis, and Joyner \cite{Joyner} generalized these results to a larger class of L-functions.
In particular,  (\ref{eqn:SelbergCLT}) holds also for $\log\zeta$ evaluated close to the critical axis
($1/2+\epsilon_t+\ii\omega t$) provided that $\epsilon_t\ll 1/\log t$; for $\epsilon_t\to 0$ and $\epsilon_t\gg 1/\log t$,
 Tsang proved that a change of normalization is necessary:
\begin{equation}\label{eqn:GeneralSelbergCLT}
\frac{\log \zeta\left(\frac{1}{2}+\epsilon_t+\ii \omega
t\right)}{\sqrt{-\log\epsilon_t}}\convlaw Y',
\end{equation}
with $\omega$ uniform on $(0,1)$ and $Y'$ a standard complex normal variable.

Second, a multidimensional extension of (\ref{eqn:SelbergCLT}) was given by
Hughes, Nikeghbali and Yor \cite{HNY}, in order to get a dynamic analogue of  Selberg's central limit theorem :
they showed that for any $0<\lambda_1<\dots<\lambda_\ell$
\begin{multline}\label{eqn:HNYCLT}
\frac{1}{\sqrt{\log\log t}}\left(\log \zeta\left(\frac{1}{2}+\ii \omega
e^{(\log t)^{\lambda_1}}\right),\dots,\log \zeta\left(\frac{1}{2}+\ii \omega
e^{(\log t)^{\lambda_\ell}}\right)\right)\\
\convlaw
\left(\lambda_1Y_1,\dots,
\lambda_\ell Y_\ell\right),
\end{multline}
all the $Y_k$'s being independent standard complex normal variables.
The evaluation points $\frac{1}{2}+\ii \omega
e^{(\log t)^{\lambda_k}}$ in the above formula are very distant from each other and a natural question is whether, for closer points,
a non-trivial correlation structure appears for the values of zeta.
Actually, the average values of $\log\zeta$ become correlated for small shifts, and the Gaussian kernel appearing
in the limit coincides with the one of Brownian motion off the diagonal.
More precisely, our main result is the following.

\begin{thm}\label{thm:jointNT}
Let $\omega$ be uniform on $(0,1)$, $\epsilon_t\to 0$, $\epsilon_t\gg 1/\log t$, and functions
$0\leq f^{(1)}_t< \dots< f^{(\ell)}_t<c<\infty$.
Suppose that for all $i\neq j$
\begin{equation}\label{eqn:conditionNT}
\frac{\log|f^{(j)}_t-f^{(i)}_t|}{\log \epsilon_t}\to c_{i,j}\in[0,\infty].
\end{equation}
Then the vector
\begin{equation}\label{eqn:resultNT}
\frac{1}{\sqrt{-\log\epsilon_t}}\left(\log \zeta\left(\frac{1}{2}+\epsilon_t+\ii f^{(1)}_t+\ii \omega
t\right),\dots,
\log \zeta\left(\frac{1}{2}+\epsilon_t+\ii f^{(\ell)}_t+\ii \omega
t\right)\right)
\end{equation}
converges in law to a complex Gaussian vector $(Y_1,\dots,Y_\ell)$ with mean 0 and covariance function
\begin{equation}\label{eqn:covariance}
\cov(Y_i,Y_j)=\left\{
\begin{array}{ccc}
1&if&i=j\\
1\wedge c_{i,j}&if&i\neq j
\end{array}
\right..
\end{equation}
Moreover, the above result remains true if $\epsilon_t\ll 1/\log t$,
replacing the normalization $-\log \epsilon_t$ with $\log\log t$ in (\ref{eqn:conditionNT}) and (\ref{eqn:resultNT}).
\end{thm}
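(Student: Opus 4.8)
The plan is to carry Selberg's method into the multidimensional, shifted setting: replace each coordinate $\log\zeta\bigl(\tfrac12+\epsilon_t+\ii f^{(k)}_t+\ii\omega t\bigr)$ by a short Dirichlet polynomial over primes, prove a joint central limit theorem for the resulting vector by the method of moments, and read off the covariance \eqref{eqn:covariance} from an explicit second-moment computation. For the replacement one invokes the analytic input underlying \eqref{eqn:GeneralSelbergCLT} (Selberg's argument, sharpened by Tsang): there is a cutoff $X=t^{\o(1)}$ — chosen with $1/\epsilon_t\ll\log X=\o(\log t)$ when $\epsilon_t\gg 1/\log t$ (possible precisely because then $1/\epsilon_t=\o(\log t)$), and with $\log X=\log t/(\log\log t)^2$ when $\epsilon_t\ll 1/\log t$ — for which
\[
\log\zeta\Bigl(\tfrac12+\epsilon_t+\ii f^{(k)}_t+\ii\omega t\Bigr)=\sum_{p\le X}\frac{1}{p^{1/2+\epsilon_t+\ii f^{(k)}_t+\ii\omega t}}+R^{(k)}_t ,
\]
with $R^{(k)}_t/\sqrt{-\log\epsilon_t}\to 0$ in probability: prime powers $p^m$ with $m\ge 2$ and the truncation tail contribute $O(1)$, killed by the normalization, while the contribution of the zeros of $\zeta$ near the evaluation point is controlled by Selberg's bound on the number of zeros in short intervals. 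Since the shifts $f^{(k)}_t$ are bounded, passing from $\omega t$ to $f^{(k)}_t+\omega t$ only perturbs the averaging interval near its endpoints, so this one-dimensional statement applies to each of the finitely many coordinates; by Slutsky's lemma it then suffices to establish the claimed limit for $Z=(Z_1,\dots,Z_\ell)$, where $Z_k=(-\log\epsilon_t)^{-1/2}\sum_{p\le X}p^{-1/2-\epsilon_t-\ii f^{(k)}_t-\ii\omega t}$.

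The covariance is the heart of the matter. Write $\delta_{ij}=f^{(i)}_t-f^{(j)}_t$ and $N=-\log\epsilon_t$. From $\int_0^1 (p/q)^{\ii\omega t}\,\dd\omega=\1_{p=q}+O(X/t)$ for primes $p,q\le X$ (and its analogue for products of boundedly many such primes being $\ne 1$) one obtains $\E_\omega[Z_iZ_j]=\o(1)$ and $\E_\omega[Z_i\overline{Z_j}]=\tfrac1N\sum_{p\le X}p^{-1-2\epsilon_t-\ii\delta_{ij}}+\o(1)$. In the regime $\epsilon_t\gg1/\log t$, where $\log X\gg1/\epsilon_t$, the partial sum equals — up to the bounded prime-power correction and an $O(1)$ error — $\log\zeta(1+2\epsilon_t+\ii\delta_{ij})$; since $\zeta$ has a simple pole at $1$ and no zero on $\Re s\ge 1$, one has $\log\zeta(1+z)=-\log z+O(1)$ uniformly for $z$ in the relevant bounded region, so the real part of the partial sum is $-\tfrac12\log(4\epsilon_t^2+\delta_{ij}^2)+O(1)$ and its imaginary part is $O(1)$. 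Dividing by $N\to\infty$ and comparing, via \eqref{eqn:conditionNT}, the two powers $\epsilon_t^{2}$ and $\epsilon_t^{2c_{i,j}}$ (the first dominating iff $c_{i,j}\ge 1$), one gets $\E_\omega[Z_i\overline{Z_j}]\to 1$ if $c_{i,j}\ge 1$ and $\to c_{i,j}$ if $c_{i,j}\le 1$, i.e. $1\wedge c_{i,j}$. In the regime $\epsilon_t\ll1/\log t$ the factor $p^{-2\epsilon_t}$ is $1+\o(1)$ for $p\le X$, and the same partial-summation estimate applied to $\sum_{p\le X}p^{-1-\ii\delta_{ij}}$, now divided by $\log\log X\sim\log\log t$ and combined with the modified hypothesis, again gives $1\wedge c_{i,j}$.

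It remains to prove the joint central limit theorem for $Z$. Expanding a mixed moment $\E_\omega\bigl[\prod_k Z_k^{\alpha_k}\overline{Z_k}^{\beta_k}\bigr]$ yields $N^{-\sum_k(\alpha_k+\beta_k)/2}$ times a sum over tuples of primes $\le X$; integration in $\omega$ annihilates every tuple except those for which the product of the ``numerator'' primes equals the product of the ``denominator'' primes — the off-diagonal error being $\o(1)$ precisely because $X=t^{\o(1)}$ — and for primes this forces a perfect matching of numerator slots with denominator slots (in particular $\sum_k\alpha_k=\sum_k\beta_k$, else the moment is $\o(1)$). As $t\to\infty$ the surviving contribution is dominated by matchings into pairwise distinct primes, a pair drawn from a $Z_i$-slot and a $\overline{Z_j}$-slot contributing $\tfrac1N\sum_{p\le X}p^{-1-2\epsilon_t-\ii\delta_{ij}}\to\cov(Y_i,Y_j)$; summing over matchings reproduces exactly the Wick/Isserlis formula for the centered complex Gaussian vector $(Y_1,\dots,Y_\ell)$ with covariance \eqref{eqn:covariance} and vanishing pseudo-covariance (the latter matching the absence of numerator--numerator and denominator--denominator pairings). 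Since a Gaussian law is determined by its moments, convergence of all moments gives convergence in law, which together with the first step proves the theorem. One may instead run this last step through the characteristic function $\E_\omega\exp\bigl(\ii\,\Re\langle u,Z\rangle\bigr)$, using that $(p^{\ii\omega t})_{p\le X}$ mimics an i.i.d.\ family of uniform variables on the unit circle and a Lindeberg--Feller argument for triangular arrays.

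The main obstacle is the first step — the unconditional control of $R^{(k)}_t$ — i.e.\ proving that $\log\zeta$ close to the critical line is governed by the primes up to $t^{\o(1)}$, with error negligible on the scale $\sqrt{-\log\epsilon_t}$ in each of the two ranges of $\epsilon_t$. This is the substantive part of Selberg's and Tsang's analysis (the explicit formula together with zero-counting estimates); once it is granted, the multidimensional phenomenon and the kernel $1\wedge c_{i,j}$ follow from the elementary computations sketched above.
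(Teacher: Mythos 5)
Your proposal is correct and shares the paper's overall architecture (Selberg's Dirichlet-polynomial approximation, then a CLT for the prime sum, with the covariance read off from $\log\zeta(1+2\epsilon_t+\ii\delta_{ij})\sim-\log(2\epsilon_t+\ii\delta_{ij})$ resp.\ the prime number theorem --- exactly Lemmas \ref{lem:NT1} and \ref{lem:NT2}), but it establishes the joint Gaussianity by a genuinely different route. You run the classical method of moments: truncate at $X=t^{\o(1)}$ so that off-diagonal tuples contribute $\o(1)$ (unique factorization giving $|\log(m/n)|\geq 1/\max(m,n)\geq X^{-(a+b)}$), and identify the diagonal with the Wick formula. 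The paper instead isolates Proposition \ref{prop:DiacShahNT}, a number-theoretic analogue of the Diaconis--Evans theorem, whose point (stated explicitly in the remark preceding it) is to \emph{avoid} recomputing Gaussian moments: Montgomery--Vaughan kills the tail $p>m_t$ in $L^2$, Petrov's CLT handles the idealized sum with independent phases $e^{\ii\omega_p}$, and a quantitative equidistribution estimate (Fourier analysis on the torus plus Lemma \ref{lem:arith}) transfers the result to $p^{-\ii\omega t}$. The arithmetic input is the same in both routes; yours is more self-contained and classical, the paper's yields a reusable criterion in which only a second moment must ever be computed. Two small points to tighten: the Selberg--Tsang $L^2$ approximation is stated for $x$ a fixed positive power of $t$, so passing from there down to $X=t^{\o(1)}$ costs an extra $L^2$ estimate (Montgomery--Vaughan or Mertens) that you should make explicit rather than fold into ``the truncation tail contributes $O(1)$''; and in the paper's Vinogradov convention $\epsilon_t\gg1/\log t$ does not imply $1/\epsilon_t=\o(\log t)$, so the boundary case $\epsilon_t\asymp1/\log t$ must be routed through your second regime --- harmless, since there $-\log\epsilon_t\sim\log\log t$.
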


The covariance structure (\ref{eqn:covariance}) of the limit
Gaussian vector actually depends only on the $\ell-1$ parameters
$c_{1,2},\dots,c_{\ell-1,\ell}$ because formula
(\ref{eqn:conditionNT}) implies, for all $i<k<j$,
$c_{i,j}=c_{i,k}\wedge c_{k,j}$. We will explicitly construct
Gaussian vectors with the correlation structure
(\ref{eqn:covariance}) in section
\ref{Section:SpatialBranchingProcesses}.

We now illustrate Theorem \ref{thm:jointNT}. Take $\ell=2$, $\epsilon_t\to 0$, $\epsilon_t\gg 1/\log t$.
Then for any $0\leq \delta\leq 1$ and
$\omega$ uniform on $(0,1)$, choosing $f^{(1)}_t=0$ and $f^{(2)}_t=\epsilon_t^\delta$,
$$
\frac{1}{\sqrt{-\frac{1}{2}\log\epsilon_t}}\left(\log \left|\zeta\left(\frac{1}{2}+\epsilon_t+\ii \omega
t\right)\right|,
\log \left|\zeta\left(\frac{1}{2}+\epsilon_t+\ii \omega
t+\ii \epsilon_t^\delta\right)\right|\right)
$$
converges in law to
\begin{equation}\label{eqn:GaussianLimit}
(\mathcal{N}_1,\delta\mathcal{N}_1+\sqrt{1-\delta^2}\mathcal{N}_2),
\end{equation}
where $\mathcal{N}_1$ and $\mathcal{N}_2$ are independent standard real normal variables.
A similar result holds if $\epsilon_t\ll 1/\log t$, in particular we have a central limit theorem on the critical axis $\epsilon_t=0$ :
$$
\frac{1}{\sqrt{\frac{1}{2}\log\log t}}\left(\log \left|\zeta\left(\frac{1}{2}+\ii \omega
t\right)\right|,
\log \left|\zeta\left(\frac{1}{2}+\ii \omega
t+\frac{\ii}{(\log t)^\delta}\right)\right|\right)
$$
also converges in law to (\ref{eqn:GaussianLimit}).
Note the change of normalization
according to $\epsilon_t$, i.e. the distance to the critical axis.
Finally, if all shifts $f^{(i)}_t$ are constant and distinct, $c_{i,j}=0$ for all $i$ and $j$, so
the distinct means of $\zeta$ converge in law to independent complex normal variables,
after normalization.

\begin{rem}
In this paper we are concerned with distinct shifts along the ordinates, in particular because it implies
the following Corollary \ref{cor:CountingNT} about counting the zeros of the zeta function.
The same method equally applies to distinct shifts along the abscissa, not enounced here for simplicity. For example,
the Gaussian variables $Y$ and $Y'$ in (\ref{eqn:SelbergCLT}) and (\ref{eqn:GeneralSelbergCLT}) have correlation
$1\wedge\sqrt{\delta}$ if $\epsilon_t=1/(\log t)^\delta$ with $\delta>0$.
\end{rem}

Theorem \ref{thm:jointNT} can be understood in terms of Gaussian processes :
it has the following immediate consequence, enounced for $\epsilon_t=0$ for simplicity.

\begin{cor}\label{cor:StrangeProcess}
Let $\omega$ be uniform on $(0,1)$. Consider the random function
$$\left(\frac{1}{\sqrt{\log\log t}}\log\left|\zeta\left(\frac{1}{2}+\ii\omega t+\frac{\ii}{(\log t)^\delta}\right)\right|,0\leq \delta\leq 1\right)$$
Then its finite dimensional distribution converge, as $t\to\infty$, to those of a centered Gaussian process with kernel
$\Gamma_{\gamma,\delta}=\gamma\wedge \delta$ if $\gamma\neq \delta$, 1 if $\gamma=\delta$.
\end{cor}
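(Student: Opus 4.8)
The plan is to read the statement off Theorem \ref{thm:jointNT} directly, in its $\epsilon_t\ll 1/\log t$ form with $\epsilon_t\equiv 0$; indeed Corollary \ref{cor:StrangeProcess} is essentially the $\ell$-point, variable-$\delta$ version of the example (\ref{eqn:GaussianLimit}) discussed just above it. Fix $0\le\delta_1<\dots<\delta_\ell\le 1$. Since $\delta\mapsto(\log t)^{-\delta}$ is decreasing, the finite family of shifts $\{(\log t)^{-\delta_j}\}_{j}$, once listed in increasing order, provides admissible functions $0\le f^{(1)}_t<\dots<f^{(\ell)}_t\le 1<\infty$. First I would verify hypothesis (\ref{eqn:conditionNT}) in its form with $\log\epsilon_t$ replaced by $-\log\log t$: for two exponents $\gamma<\delta$ one has $(\log t)^{-\gamma}-(\log t)^{-\delta}=(\log t)^{-\gamma}\bigl(1-(\log t)^{\gamma-\delta}\bigr)\sim(\log t)^{-\gamma}$, hence $\log\bigl|(\log t)^{-\gamma}-(\log t)^{-\delta}\bigr|\sim-\gamma\log\log t$ and the associated constant $c_{i,j}$ equals $\gamma\wedge\delta$; the boundary value $\gamma=0$, where the shift degenerates to the constant $1$, is covered as well since then $\log|1-(\log t)^{-\delta}|\to 0$, giving $c_{i,j}=0=0\wedge\delta$.

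With (\ref{eqn:conditionNT}) checked, Theorem \ref{thm:jointNT} applies and yields that $\frac{1}{\sqrt{\log\log t}}\bigl(\log\zeta(\tfrac12+\ii\omega t+\ii(\log t)^{-\delta_k})\bigr)_{1\le k\le\ell}$ converges in law to a centered complex Gaussian vector $(Y_k)$ with $\cov(Y_i,Y_i)=1$ and $\cov(Y_i,Y_j)=1\wedge(\delta_i\wedge\delta_j)=\delta_i\wedge\delta_j$ for $i\ne j$ (after undoing the reordering of the shifts, the permutation acting on the indices of $(Y_k)$). Applying the continuous map $\Re\colon\CC^\ell\to\RR^\ell$ and recalling $\log|\zeta|=\Re\log\zeta$, the real vector $\frac{1}{\sqrt{\log\log t}}\bigl(\log|\zeta(\tfrac12+\ii\omega t+\ii(\log t)^{-\delta_k})|\bigr)_k$ converges in law to the centered real Gaussian vector $(\Re Y_k)$. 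Since each $Y_k$ is a standard complex normal and the limiting field is circularly symmetric (as in (\ref{eqn:GaussianLimit})), the covariance of $(\Re Y_k)$ is proportional to that of $(Y_k)$, so one reads off the kernel $\Gamma_{\delta_i,\delta_j}=\delta_i\wedge\delta_j$ for $i\ne j$ and $\Gamma_{\delta_i,\delta_i}$ a constant, as claimed. As the $\delta_k$ were an arbitrary finite set, this is precisely the convergence of finite-dimensional distributions.

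The main point is that there is no real obstacle: the whole content sits in Theorem \ref{thm:jointNT}, and the argument is pure bookkeeping. The only places asking for a line of care are the two boundary features noted above — the degenerate shift at $\delta=0$, and the need to reorder the evaluation points before quoting the theorem because $\delta\mapsto(\log t)^{-\delta}$ is decreasing — together with keeping track of the (harmless) constant relating $\log\zeta$ to its real part $\log|\zeta|$, equivalently the normalisation appropriate to Selberg's theorem.
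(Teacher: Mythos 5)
Your proposal is correct and follows exactly the route the paper intends: the paper gives no separate proof, presenting the corollary as an immediate consequence of Theorem \ref{thm:jointNT} with $\epsilon_t=0$, and your verification of condition (\ref{eqn:conditionNT}) for the shifts $(\log t)^{-\delta_j}$ (including the reordering and the degenerate case $\delta=0$) is precisely the required bookkeeping. Your remark about the multiplicative constant arising from $\log|\zeta|=\Re\log\zeta$ is well taken — the stated kernel matches the normalisation $\sqrt{\tfrac12\log\log t}$ used in the illustration preceding the corollary rather than the $\sqrt{\log\log t}$ written in its statement — so you track this at least as carefully as the paper does.
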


There is an effective construction of a centered Gaussian process
$(X_\delta,0\leq \delta\leq 1)$ with covariance function $\Gamma_{\gamma,\delta}$ : let
$(B_\delta,0\leq \delta\leq 1)$ be a standard Brownian motion and independently let $(D_\delta,0\leq \delta\leq 1)$ be a totally disordered process,
meaning that all its coordinates are independent centered Gaussians with variance $\E(D_\delta^2)=\delta$. Then
$$X_\delta=B_\delta+D_{1-\delta}$$
defines a Gaussian process with the desired covariance function. Note that there is no measurable version of this process :
if there were, then  $(D_\delta,0\leq\delta\leq 1)$ would have a measurable version which is absurd because, by Fubini's Theorem,
for all $0\leq a<b\leq 1$
$\E\left(\left(\int_{a}^b D_\delta\dd\delta\right)^2\right)=0$, so $\int_{a}^b D_\delta\dd\delta=0$ a.s. and $D_\delta=0$ a.s.
giving the contradiction.\\

\subsection{Counting the zeros}

 Theorem \ref{thm:jointNT} also has a strange consequence for the counting of zeros of $\zeta$ on intervals
in the critical strip. Write $N(t)$ for
the number of non-trivial zeros $z$ of $\zeta$ with $0< \Im z \leq t$, counted with their multiplicity. Then
(see e.g. Theorem 9.3 in Titchmarsh \cite{Tit})
\begin{equation}\label{eqn:NdeT}
N(t)=\frac{t}{2\pi}\log\frac{t}{2\pi e}+\frac{1}{\pi}\Im\log\zeta\left(1/2+\ii t\right)+\frac{7}{8}+O\left(\frac{1}{t}\right)
\end{equation}
with $\Im\log\zeta\left(1/2+\ii t\right)=O(\log t)$. For $t_1<t_2$ we will write
$$\Delta(t_1,t_2)=\left(N(t_2)-N(t_1)\right)-\left(\frac{t_2}{2\pi}\log\frac{t_2}{2\pi e}-\frac{t_1}{2\pi}\log\frac{t_1}{2\pi e}\right),$$
which represents the fluctuations of the number of zeros $z$ ($t_1< \Im z \leq t_2$) minus its {\it expectation}.
A direct consequence of Theorem \ref{thm:jointNT}, choosing $\ell=2$, $f^{(1)}(t)= 0$ and
$f^{(2)}(t)=\frac{1}{(\log t)^\delta}$ ($0\leq \delta\leq 1$), is the following central limit theorem obtained by Fujii \cite{Fujii}:
$$
\frac{\Delta\left(\omega t,\omega t+\frac{1}{(\log t)^\delta}\right)}{\frac{1}{\pi}\sqrt{\log\log t}}\convlaw \sqrt{1-\delta}\mathcal{N}
$$
as $t\to\infty$, where $\omega$ is uniform on $(0,1)$ and
$\mathcal{N}$ is a standard real normal variable. A more general
result actually holds, being a direct consequence of Theorem
\ref{thm:jointNT} and (\ref{eqn:NdeT}). This confirms numerical experiments by
Coram and Diaconis \cite{CD}, who after making extensive tests (based on
data by Odlyzko) suggested that the correlation structure
(\ref{eqn:strangeCorrelation}) below should appear when counting the
zeros of $\zeta$. Following \cite{CD} the phenomenon presented below can be
seen as the {\it mesoscopic} repulsion of the zeta zeros, different
from the Montgomery-Odlyzko law, describing the repulsion at a
microscopic scale.

\begin{cor}\label{cor:CountingNT}
Let $(K_t)$ be such that, for some $\epsilon>0$ and all $t$, $K_t>\epsilon$. Suppose
$\log K_t/\log\log t\to \delta\in[0,1)$ as $t\to\infty$.
Then the finite dimensional distributions of the process
$$\frac{\Delta\left(\omega t+\alpha/K_t,\omega t+\beta/K_t\right)}{\frac{1}{\pi}\sqrt{(1-\delta)\log\log t}},\ 0\leq\alpha< \beta<\infty$$
converge to those of a centered Gaussian process $(\tilde\Delta(\alpha,\beta),0\leq\alpha<\beta<\infty)$ with the
covariance structure

\begin{equation}\label{eqn:strangeCorrelation}
\E\left(
\tilde\Delta(\alpha,\beta)\tilde\Delta(\alpha',\beta')
\right)
=
\left\{
\begin{array}{cl}
1&\mbox{if $\alpha=\alpha'$ \mbox{and} $\beta=\beta'$}\\
1/2&\mbox{if $\alpha=\alpha'$ \mbox{and} $\beta\neq\beta'$}\\
1/2&\mbox{if $\alpha\neq\alpha'$ \mbox{and} $\beta=\beta'$}\\
-1/2&\mbox{if $\beta=\alpha'$}\\
0&\mbox{elsewhere}
\end{array}
\right..
\end{equation}
\end{cor}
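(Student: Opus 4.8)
The plan is to reduce the statement to the imaginary part of the multidimensional convergence in Theorem~\ref{thm:jointNT}, and then to read off (\ref{eqn:strangeCorrelation}) from a bilinear computation with the limiting Gaussian vector. First I would fix finitely many pairs $(\alpha_1,\beta_1),\dots,(\alpha_m,\beta_m)$ with $0\le\alpha_k<\beta_k<\infty$, and let $s_1<\dots<s_r$ be the distinct real numbers occurring among all the $\alpha_k$ and $\beta_k$, writing $\iota(s_i)=i$. Subtracting two instances of (\ref{eqn:NdeT}) gives, for $t_1<t_2$,
\[
\Delta(t_1,t_2)=\frac{1}{\pi}\bigl(\Im\log\zeta(\tfrac12+\ii t_2)-\Im\log\zeta(\tfrac12+\ii t_1)\bigr)+O(1/t_1),
\]
the constants $7/8$ cancelling. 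I would apply this with $t_1=\omega t+\alpha_k/K_t$ and $t_2=\omega t+\beta_k/K_t$: the hypothesis $K_t>\epsilon$ makes the shifts bounded and the error $O(1/(\omega t))$, and since $\Prob(\omega t<\sqrt t)=t^{-1/2}\to0$ while $1/(\omega t)\le t^{-1/2}$ off that event, the error is $o_P(1)$. Dividing by $\tfrac1\pi\sqrt{(1-\delta)\log\log t}$ (legitimate as $\delta<1$) yields
\[
\frac{\Delta(\omega t+\alpha_k/K_t,\,\omega t+\beta_k/K_t)}{\tfrac1\pi\sqrt{(1-\delta)\log\log t}}=\frac{\Im\log\zeta(\tfrac12+\ii\omega t+\ii\beta_k/K_t)-\Im\log\zeta(\tfrac12+\ii\omega t+\ii\alpha_k/K_t)}{\sqrt{(1-\delta)\log\log t}}+o_P(1).
\]

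Next I would invoke Theorem~\ref{thm:jointNT} in the form $\epsilon_t=0$ with the $\log\log t$ normalization, $\ell=r$, and shifts $f^{(i)}_t=s_i/K_t$. These are eventually $0\le f^{(1)}_t<\dots<f^{(r)}_t<c$, and for $i\neq j$
\[
\frac{-\log|f^{(j)}_t-f^{(i)}_t|}{\log\log t}=\frac{\log K_t-\log|s_j-s_i|}{\log\log t}\longrightarrow\delta,
\]
so condition (\ref{eqn:conditionNT}) holds with $c_{i,j}=\delta$ for every $i\neq j$. Hence $\tfrac1{\sqrt{\log\log t}}\bigl(\log\zeta(\tfrac12+\ii\omega t+\ii s_i/K_t)\bigr)_{1\le i\le r}$ converges in law to a complex Gaussian vector $(Y_1,\dots,Y_r)$ with $\cov(Y_i,Y_j)=1$ if $i=j$ and $1\wedge\delta=\delta$ if $i\neq j$. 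Since such a vector, as defined in paragraph~\ref{subsection:Definitions}, is circularly symmetric, $(\Im Y_1,\dots,\Im Y_r)$ is a centered real Gaussian vector with $\cov(\Im Y_i,\Im Y_j)=\tfrac12\cov(Y_i,Y_j)$, equal to $\tfrac12$ on the diagonal and $\tfrac\delta2$ off it. By the continuous mapping theorem and Slutsky's lemma applied to the previous display, the finite-dimensional family of the corollary converges in law to $\bigl(\tilde\Delta(\alpha_k,\beta_k)\bigr)_{1\le k\le m}$, where $\tilde\Delta(\alpha,\beta):=(1-\delta)^{-1/2}\bigl(\Im Y_{\iota(\beta)}-\Im Y_{\iota(\alpha)}\bigr)$, a centered Gaussian process.

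It then remains to compute $\E\bigl(\tilde\Delta(\alpha,\beta)\tilde\Delta(\alpha',\beta')\bigr)$, which I would do by expanding the product into four covariances $\cov(\Im Y_{\iota(\cdot)},\Im Y_{\iota(\cdot)})$, each equal to $\tfrac12$ when the two endpoints coincide and $\tfrac\delta2$ otherwise. Running through the cases $\{\alpha=\alpha',\beta=\beta'\}$, $\{\alpha=\alpha',\beta\neq\beta'\}$, $\{\alpha\neq\alpha',\beta=\beta'\}$, $\{\beta=\alpha'\}$ (equivalently $\{\alpha=\beta'\}$; the strict inequalities $\alpha<\beta$, $\alpha'<\beta'$ make these subcases disjoint and force the remaining endpoint pairs to differ), and the generic case $\{\alpha,\beta\}\cap\{\alpha',\beta'\}=\varnothing$, the four-term sums equal $1-\delta$, $\tfrac{1-\delta}2$, $\tfrac{1-\delta}2$, $-\tfrac{1-\delta}2$ and $0$ respectively; dividing by $1-\delta$ reproduces exactly (\ref{eqn:strangeCorrelation}).

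The argument is essentially bookkeeping once Theorem~\ref{thm:jointNT} is granted. The only points that need care are the identification $c_{i,j}=\delta$ for all pairs (where the hypotheses $K_t>\epsilon$ and $\log K_t/\log\log t\to\delta$ enter), the handling of shifts $\alpha_k,\beta_k$ that coincide across distinct pairs so that the ``$\beta=\alpha'$'' and ``elsewhere'' rows of (\ref{eqn:strangeCorrelation}) come out correctly, and the circular symmetry of the limiting complex Gaussian, used to pass from $\cov(Y_i,Y_j)$ to $\cov(\Im Y_i,\Im Y_j)$. I do not expect any genuine analytic obstacle.
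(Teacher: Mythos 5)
Your proposal is correct and follows exactly the route the paper intends: the paper gives no detailed argument, stating only that the corollary is "a direct consequence of Theorem \ref{thm:jointNT} and (\ref{eqn:NdeT})", and your write-up supplies precisely those details (the reduction via (\ref{eqn:NdeT}) with the $O(1/t_1)$ error handled in probability, the choice $f^{(i)}_t=s_i/K_t$ giving $c_{i,j}=\delta$ for all pairs, the passage to imaginary parts via the circular symmetry built into the paper's definition of complex Gaussian vectors, and the four-term covariance bookkeeping). No gaps.
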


This correlation structure is surprising : for example
$\tilde\Delta(\alpha,\beta)$ and $\tilde\Delta(\alpha',\beta')$ are independent
if the segment $[\alpha,\beta]$ is strictly included in $[\alpha',\beta']$, and
positively correlated if this inclusion is not strict.
Note that there is again an effective construction of $\tilde\Delta$ : if $(\tilde D_\delta,\delta\geq 0)$
is a real valued {\it process} with all coordinates independent centered Gaussians with variance $\E(\tilde D_\delta^2)=1/2$, then
$$\tilde\Delta(\alpha,\beta)=\tilde D_\beta-\tilde D_\alpha$$
has the required correlation structure.
Concerning the discovery of this
exotic Gaussian correlation function in the context of unitary matrices, see the remark after
Theorem \ref{thm:jointRMT}.

\subsection{Analogous result on random matrices}\label{subsection:AnalogueRMT}

We note $Z(u_n,X)$ the characteristic polynomial of a matrix $u_n\in U(n)$,
and often abbreviate it as $Z$.
Theorem \ref{thm:jointNT} was inspired by the following analogue (Theorem \ref{thm:jointRMT})
in random matrix theory. This confirms the validity of the correspondence
$$
n\leftrightarrow\log t
$$
between the dimension of random matrices and the length of integration on the
critical axis, but it also supports this analogy at a local scale,
for the evaluation points of $\log Z$ and $\log \zeta$ : the  necessary shifts
are strictly analogue both for the abscissa$\setminus$radius ($\epsilon_n\setminus\epsilon_t$)
and the ordinate$\setminus$angle ($f^{(i)}\setminus\phi^{(i)}$).

\begin{thm} \label{thm:jointRMT}
Let $u_n\sim\mu_{U(n)}$, $\epsilon_n\to 0$, $\epsilon_n\gg 1/n$, and functions
$0\leq\phi^{(1)}_n< \dots< \phi^{(\ell)}_n<2\pi-\delta$ for some $\delta>0$.
Suppose that for all $i\neq j$
\begin{equation}\label{eqn:conditionRMT}
\frac{\log|\phi^{(j)}_n-\phi^{(i)}_n|}{\log \epsilon_n}\to c_{i,j}\in[0,\infty].
\end{equation}
Then the vector
\begin{equation}\label{eqn:resultRMT}
\frac{1}{\sqrt{-\log\epsilon_n}}\left(
\log Z(u_n,e^{\epsilon_n+\ii\phi^{(1)}_n}),\dots,\log Z(u_n,e^{\epsilon_n+\ii\phi^{(\ell)}_n})
\right)
\end{equation}
converges in law to a complex Gaussian vector with mean 0 and covariance function (\ref{eqn:covariance}).
Moreover, the above result remains true if $\epsilon_n\ll 1/n$,
replacing the normalization $-\log \epsilon_n$ with $\log n$ in (\ref{eqn:conditionRMT}) and (\ref{eqn:resultRMT}).
\end{thm}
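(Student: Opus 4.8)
The plan is to reduce everything to the behaviour of the traces $\Tr(u_n^k)$ of powers of $u_n$. Writing $Z(u_n,X)=\det(\Id-X^{-1}u_n)$ and using $\epsilon_n>0$, one has the absolutely convergent expansion
\[
\log Z\big(u_n,e^{\epsilon_n+\ii\phi^{(m)}_n}\big)=-\sum_{k\geq 1}\frac{e^{-k\epsilon_n}}{k}\,e^{-\ii k\phi^{(m)}_n}\,\Tr(u_n^k),
\]
so the first step is to truncate at a level $N=N(n)$ with $1/\epsilon_n\ll N\ll n$, which is possible exactly because $\epsilon_n\gg 1/n$; call $\log Z^{(N)}_m$ the partial sum. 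From the classical identities $\E[\Tr(u_n^j)\overline{\Tr(u_n^k)}]=\delta_{jk}\min(j,n)$ and $\E[\Tr(u_n^j)\Tr(u_n^k)]=0$,
\[
\E\big|\log Z_m-\log Z^{(N)}_m\big|^2=\sum_{k>N}\frac{\min(k,n)}{k^2}e^{-2k\epsilon_n}\leq\frac1N\sum_{k>N}e^{-2k\epsilon_n}\ll\frac{e^{-2N\epsilon_n}}{N\epsilon_n}\longrightarrow 0,
\]
so, after dividing by $\sqrt{-\log\epsilon_n}\to\infty$, the tail is negligible in $L^2$ and it suffices to treat the truncated vector.

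Next I would compute the covariance of the truncation. Using the same identities, and noting that the terms $k>n$ contribute only $O(e^{-2n\epsilon_n}/(n\epsilon_n))=\o(1)$,
\[
\E\big[\log Z^{(N)}_i\,\overline{\log Z^{(N)}_j}\big]=\sum_{k=1}^{N}\frac{e^{-2k\epsilon_n}}{k}e^{-\ii k(\phi^{(i)}_n-\phi^{(j)}_n)}+\o(1)=-\log\big(1-e^{-2\epsilon_n-\ii(\phi^{(i)}_n-\phi^{(j)}_n)}\big)+\o(1),
\]
and $\E[\log Z^{(N)}_i\log Z^{(N)}_j]=0$, so the limit is a circularly symmetric complex Gaussian. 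Since $|1-e^{-2\epsilon_n-\ii\Delta}|\asymp\epsilon_n\vee|\Delta|$ when $\Delta=\phi^{(i)}_n-\phi^{(j)}_n\to 0$, is bounded below otherwise (the constraint $\phi^{(\ell)}_n<2\pi-\delta$ forbidding distinct angles from colliding modulo $2\pi$), and has bounded argument, dividing by $-\log\epsilon_n$ and invoking (\ref{eqn:conditionRMT}) gives
\[
\frac{1}{-\log\epsilon_n}\,\E\big[\log Z^{(N)}_i\,\overline{\log Z^{(N)}_j}\big]\longrightarrow\min\!\Big(1,\lim_{n}\frac{\log|\phi^{(i)}_n-\phi^{(j)}_n|}{\log\epsilon_n}\Big)=1\wedge c_{i,j},
\]
together with the value $1$ for $i=j$; this is exactly (\ref{eqn:covariance}).

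To pass from convergence of covariances to convergence in law I would use the method of moments. Each $\log Z^{(N)}_m$ is a finite linear combination of $\Tr(u_n^1),\dots,\Tr(u_n^N)$, and by the Diaconis--Shahshahani / Diaconis--Evans moment identity $\E\big[\prod_k\Tr(u_n^k)^{\alpha_k}\overline{\Tr(u_n^k)}^{\beta_k}\big]=\prod_k\delta_{\alpha_k\beta_k}\,k^{\alpha_k}\alpha_k!$ as soon as $\sum_k k\alpha_k\leq n$. Fixing a mixed moment of total degree $p$ in the coordinates of $\frac{1}{\sqrt{-\log\epsilon_n}}(\log Z^{(N)}_m)_m$ and expanding, every trace expectation that appears involves indices $\leq N$ and total degree $p$, hence satisfies $\sum_k k\alpha_k\leq Np\leq n$ for $n$ large; it is then \emph{exactly} the corresponding moment of $-\sum_{k\leq N}\frac{e^{-k\epsilon_n}}{\sqrt k}e^{-\ii k\phi^{(m)}_n}\mathcal{N}_k$ with $(\mathcal{N}_k)$ i.i.d.\ standard complex normal. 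So for $n$ large the moments of the truncated normalized vector agree with those of a centered complex Gaussian vector with the covariance just computed, and letting $n\to\infty$ these converge to the moments of the target $(Y_1,\dots,Y_\ell)$; since a Gaussian law is determined by its moments, $\frac{1}{\sqrt{-\log\epsilon_n}}(\log Z^{(N)}_m)_m\convlaw(Y_m)_m$, and Slutsky's lemma with the tail bound yields (\ref{eqn:resultRMT}). The regime $\epsilon_n\ll 1/n$ is handled by the same argument, truncating instead at $N=n$: then $e^{-2k\epsilon_n}=1+\o(1)$ for $k\le n$, the variance is $\sim\log n$, $\sum_{k\le n}\frac{e^{-\ii k\Delta}}{k}=-\log(1-e^{-\ii\Delta})+O\big(1/(n|\Delta|)\big)$, and the normalized covariances again tend to $1\wedge c_{i,j}$ with $c_{i,j}$ now read off from $\log|\phi^{(j)}_n-\phi^{(i)}_n|/\log n$; when $\epsilon_n=0$ one first replaces the divergent series by the eigenvalue product $\log Z=\sum_j\log\!\big(1-e^{\ii(\theta_j-\phi)}\big)$, which does not affect the $L^2$ estimates.

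The delicate point is the interchange of limits in the moment step: the truncation level $N(n)$ grows with $n$, so one cannot simply quote a fixed-dimensional central limit theorem for the vector of traces. What makes the argument go through is that the Diaconis--Evans identity is an \emph{exact} equality throughout the range $\sum_k k\alpha_k\le n$, so that for each fixed moment order the truncated statistic is genuinely Gaussian — not merely asymptotically so — once $n$ is large enough; the remaining ingredients (the tail bound, the asymptotics of $-\log(1-e^{-2\epsilon_n-\ii\Delta})$, and the reduction of the $\epsilon_n=0$ case) are routine.
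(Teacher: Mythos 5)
Your route is genuinely different from the paper's. The paper applies the Cram\'er--Wold device and then invokes the Diaconis--Evans central limit theorem for $\sum_j a_{nj}\Tr(u_n^j)$ as a black box, so that all that remains is the computation of $\lim\sum_j|a_{nj}|^2(j\wedge n)$ (its Lemmas on $\frac{1}{-\log\epsilon_n}\sum_{j\le n}\frac{e^{\ii j\Delta_n}}{j e^{2j\epsilon_n}}$ and $\frac{1}{\log n}\sum_{j\le n}\frac{e^{\ii j\Delta_n}}{j}$). You instead re-derive the needed CLT from scratch by truncation plus the exact Diaconis--Shahshahani moment identities; this is essentially a self-contained proof of the special case of Diaconis--Evans that is used, and your covariance computation via $-\log(1-e^{-2\epsilon_n-\ii\Delta})$ is the same calculation as the paper's first lemma. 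The trade-off is that you must manage the truncation level yourself, and this is where the write-up has two genuine (though repairable) defects.

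First, the paper defines $\epsilon_n\gg 1/n$ in the Vinogradov sense, so $\epsilon_n\asymp 1/n$ is allowed; in that case $1/\epsilon_n\asymp n$ and no $N$ with $1/\epsilon_n\ll N\ll n$ exists, so your opening claim fails at the boundary of the regime. The fix is to take $N=\lfloor n/(-\log\epsilon_n)\rfloor$ (the paper's choice of $m_n$) and to accept that the discarded tail is only $O(\log(-\log\epsilon_n))+O(1)$ in $L^2$ rather than $\o(1)$; this is still $\o(-\log\epsilon_n)$, which is all the normalization requires. Second, and more seriously, in the regime $\epsilon_n\ll 1/n$ you truncate at $N=n$, which destroys exactly the property you single out as the engine of the proof: a mixed moment of degree $p\ge 2$ of the truncated vector involves monomials with $\sum_k k\alpha_k$ as large as $np$, outside the range $\sum_k k\alpha_k\le n$ where the Diaconis--Shahshahani identity is exact, so the truncated statistic is \emph{not} exactly Gaussian at that order and the interchange of limits is no longer justified. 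Here too the repair is standard: truncate at $N=\lfloor n/\log n\rfloor$, so that $Np\le n$ eventually for each fixed $p$, and note that the additional tail
\begin{equation*}
\sum_{n/\log n<k\le n}\frac{\min(k,n)}{k^2}+\sum_{k>n}\frac{n}{k^2}=\log\log n+O(1)=\o(\log n)
\end{equation*}
is still negligible after dividing by $\log n$. (A smaller point: your asymptotic $\sum_{k\le n}e^{-\ii k\Delta}/k=-\log(1-e^{-\ii\Delta})+O(1/(n|\Delta|))$ is uninformative when $n\Delta\to 0$, i.e.\ when $c_{i,j}>1$; that case needs the separate, easy observation that the sum is then $\sim\log n$, as in the paper's case analysis.) With these corrections the argument is sound.
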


\begin{rem}
Let $N_n(\alpha,\beta)$ be the number of eigenvalues $e^{\ii\theta}$ of $u_n$ with $\alpha<\theta<\beta$, and
$\delta_n(\alpha,\beta)=N_n(\alpha,\beta)-\E_{\mu_{U(n)}}(N_n(\alpha,\beta))$. Then, a little calculation (see
\cite{HKO}) yields
$$
\delta_n(\alpha,\beta)=\frac{1}{\pi}\left(\Im\log Z(u_n,e^{\ii\beta})-\Im\log Z(u_n,e^{\ii\alpha})\right)
$$
This and the above theorem imply that, as $n\to\infty$,  the vector
$$
\frac{1}{\sqrt{\log n}}\left(\delta_n(\phi^{(1)}_n,\phi^{(2)}_n),\delta_n(\phi^{(2)}_n,\phi^{(3)}_n),\dots,
\delta_n(\phi^{(\ell-1)}_n,\phi^{(\ell)}_n)\right).
$$
converges in law to a Gaussian limit.
Central limit theorems for the counting-number of eigenvalues in intervals were discovered by Wieand \cite{Wieand}
in the special case when all the intervals have a fixed length independent of $n$ (included in the case $c_{i,j}=0$ for all $i$, $j$).
Her result was extended by Diaconis and Evans to the case $\phi^{(i)}_n=\phi^{(i)}/K_n$
for some $K_n\to\infty$, $K_n/n\to 0$
(i.e. $c_{i,j}$ is a constant independent of $i$ and $j$) : Corollary \ref{cor:CountingNT} is a number-theoretic analogue of their
Theorem 6.1 in \cite{DE}.

Note that, in the general case of distinct $c_{i,i+1}$'s,
a similar result holds but the correlation function of the limit vector is not as simple as the one in
Corollary \ref{cor:CountingNT} : it strongly depends on the relative orders of these coefficients $c_{i,i+1}$'s.
\end{rem}

\subsection{Definitions, organization of the paper}\label{subsection:Definitions}

In this paper, for more concision we will make use of the following standard definition of complex Gaussian random variables.

\begin{defn}
A complex standard normal random variable $Y$ is defined as $\frac{1}{\sqrt{2}}(\mathcal{N}_1+\ii\mathcal{N}_2)$,
$\mathcal{N}_1$ and $\mathcal{N}_2$ being independent real standard normal variables. For any $\lambda,\mu\in\CC$,
we will say that $\lambda+\mu Y$ is a complex normal variable with mean $\lambda$ and variance $|\mu|^2$.
The covariance of two complex Gaussian variables $Y$ and $Y'$ is defined as
$\cov(Y,Y')=\E(\overline{Y}Y')-\E(\overline{Y})\E(Y')$, and $\Var(Y)=\cov(Y,Y)$.

A vector $(Y_1,\dots,Y_\ell)$ is a complex Gaussian vector if any linear combination of its coordinates is a
complex normal variable. For such a complex Gaussian vector and
any $\mu=(\mu_1,\dots,\mu_\ell)\in\CC_+^\ell$, $\sum_{k=1}^{\ell} \mu_k Y_k$ has variance
$\overline{\mu} C\tra{\mu}$, where $C$ is said to be the covariance matrix of $(Y_1,\dots,Y_\ell)$ : $C_{i,j}=\cov(Y_i,Y_j)$.
\end{defn}

As in the real case, the mean and the covariance matrix characterize a complex Gaussian vector.\\

Moreover, precise definitions of $\log\zeta$ and $\log Z(X)$ are necessary :
for $\sigma\geq 1/2$, we use the standard definition
$$\log \zeta(\sigma+\ii t)=-\int_{\sigma}^\infty \frac{\zeta'}{\zeta}(s+\ii t)\dd s$$
if $\zeta$ has no zero with ordinate $t$. Otherwise,
$\log \zeta(\sigma+\ii t)=\lim_{\epsilon\to 0}\log \zeta(\sigma+\ii (t+\epsilon))$.

Similarly, let $u\sim\mu_{U(n)}$ have eigenvalues $e^{\ii\theta_1},\dots,e^{\ii\theta_n}$.
For $|X|> 1$, the principal branch of the logarithm of $Z(X)=\det(\Id-X^{-1}u)$ is chosen as
$$
\log Z(X)=\sum_{k=1}^n\log\left(1-\frac{e^{\ii\theta_k}}{X}\right)=
-\sum_{j=1}^\infty\frac{1}{j}\frac{\Tr(u^j)}{X^j}.
$$
Following Diaconis and Evans \cite{DE}, if $X_n\to X$ with $|X_n|>1$ and $|X|=1$, then $\log Z(X_n)$ converges in $L^2$ to
$-\sum_{j=1}^\infty\frac{1}{j}\frac{\Tr(u^j)}{X^j}$; therefore this is our definition of $\log Z(X)$ when $|X|=1$.\\

We will successively prove Theorems \ref{thm:jointRMT} and \ref{thm:jointNT} in the next two sections. They are
independent, but we feel that the joint central limit theorem for $\zeta$ and its analogue for the random matrices
are better understood by comparing both proofs, which are similar.
In particular Proposition \ref{prop:DiacShahNT}, which is a major step towards Theorem \ref{thm:jointNT} is a strict
number-theoretic
analogue of the Diaconis-Evans theorem used in the next section to prove Theorem \ref{thm:jointRMT}.

Finally, in Section 4, we show that the same correlation structure as (\ref{eqn:covariance}) appears in the theory of
spatial branching processes.

\section{The central limit theorem for random matrices.}

\subsection{The Diaconis-Evans method.}

Diaconis and Shahshahani \cite{DiacShah} looked at the joint moments of $\Tr{u},\Tr{u^2},\dots,\Tr{u^\ell}$ for
$u\sim\mu_{U(n)}$, and showed that any of these moments coincides with the ones of $Y_1,\sqrt{2}Y_2,\dots,\sqrt{\ell}Y_\ell$
for sufficient large $n$, the $Y_k$'s being independent standard complex normal variables.
This suggests that under general assumptions, a central limit theorem can be stated for linear combinations of these traces.

Indeed, the main tool we will use for the proof of Theorem \ref{thm:jointRMT} is the following result.

\begin{thm}[Diaconis, Evans \cite{DE}]\label{thm:DiaconisEvans}
Consider an array of complex constants $\{a_{nj}\mid n\in\NN,j\in\NN\}$. Suppose there exists $\sigma^2$
such that
\begin{equation}\label{eqn:condition1}
\lim_{n\to\infty}\sum_{j=1}^\infty |a_{nj}|^2(j\wedge n)=\sigma^2.
\end{equation}
Suppose also that there exists a sequence of positive integers $\{m_n\mid n\in\NN\}$ such that $\lim_{n\to\infty}m_n/n=0$
and
\begin{equation}\label{eqn:condition2}
\lim_{n\to\infty}\sum_{j=m_n+1}^\infty |a_{nj}|^2(j\wedge n)=0.
\end{equation}
Then $\sum_{j=1}^\infty a_{nj}\Tr{u_n^j}$ converges in distribution to $\sigma Y$, where $Y$ is a complex standard normal random variable
and $u_n\sim\mu_{U(n)}$.
\end{thm}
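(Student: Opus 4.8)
The plan is to establish convergence in distribution via the method of moments, comparing the joint moments of $\sum_j a_{nj}\Tr u_n^j$ with those of a complex Gaussian. The starting point is the Diaconis–Shahshahani result quoted above: for $u_n\sim\mu_{U(n)}$, the traces $\Tr u_n^1,\dots,\Tr u_n^k$ have the property that, once $n$ is large enough relative to the exponents appearing, their mixed moments $\E\big(\prod_j (\Tr u_n^j)^{p_j}(\overline{\Tr u_n^j})^{q_j}\big)$ exactly equal $\prod_j \delta_{p_j q_j}\, p_j!\, j^{p_j}$, i.e.\ the moments of independent complex normals $Y_j$ scaled by $\sqrt{j}$. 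Concretely I would first reduce to a \emph{finite} linear combination $S_n^{(m)}=\sum_{j=1}^m a_{nj}\Tr u_n^j$ for \emph{fixed} $m$ (not growing with $n$): for such $m$, the exact-moment phenomenon applies for $n$ large, so every mixed moment of $(S_n^{(m)},\overline{S_n^{(m)}})$ converges, as $n\to\infty$, to the corresponding moment of a centered complex Gaussian with variance $\sigma_m^2=\lim_n\sum_{j=1}^m|a_{nj}|^2 j$ (here $j\wedge n=j$ eventually). Since complex Gaussians are determined by their moments, $S_n^{(m)}\Rightarrow \sigma_m Y$.

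The second step is to pass from the truncation $S_n^{(m)}$ to the full sum. Split $\sum_{j\ge1}a_{nj}\Tr u_n^j$ into the head $\sum_{j\le m_n}$ and tail $\sum_{j>m_n}$. For the tail, a direct second-moment (variance) estimate suffices: using the well-known covariance structure $\E(\Tr u_n^j\,\overline{\Tr u_n^k})=\delta_{jk}(j\wedge n)$ valid for \emph{all} $j,k$ (this is the $L^2$ version, which holds without any largeness hypothesis on $n$ and is exactly what makes $j\wedge n$ appear), one gets
\begin{equation*}
\E\Big|\sum_{j>m_n}a_{nj}\Tr u_n^j\Big|^2=\sum_{j>m_n}|a_{nj}|^2(j\wedge n)\longrightarrow 0
\end{equation*}
by hypothesis \eqref{eqn:condition2}. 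Hence the tail goes to $0$ in $L^2$, hence in probability, and by Slutsky it is asymptotically negligible. For the head, one uses that $m_n\to\infty$ together with \eqref{eqn:condition1}: a standard diagonal/subsequence argument (or an $\epsilon$–$3$ argument over a fixed finite $m$, then letting $m\to\infty$, then invoking $m_n\to\infty$) shows that the head $\sum_{j\le m_n}a_{nj}\Tr u_n^j$ has the same limit in distribution as $S_n^{(m)}$ for $m$ large, namely $\sigma Y$ with $\sigma^2=\lim_m\sigma_m^2=\sigma^2$ by \eqref{eqn:condition1}. Combining head and tail gives the claim.

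The main obstacle is the interchange of limits in the head: one cannot simply take $m=m_n\to\infty$ inside the exact-moment statement, because Diaconis–Shahshahani guarantees equality of moments only for $n$ large depending on the (fixed) exponents, and as $m_n$ grows the relevant exponents grow too. The clean way around this is precisely the split with the intermediate scale $m_n$: treat $\sum_{j\le m}$ with $m$ fixed by exact moments, treat $\sum_{m<j\le m_n}$ and $\sum_{j>m_n}$ together by the $L^2$ bound $\sum_{j>m}|a_{nj}|^2(j\wedge n)$, which is small uniformly in $n$ once $m$ is large by \eqref{eqn:condition1}, and finally let $m\to\infty$. So the role of condition \eqref{eqn:condition2} is really just to certify that such a cutoff scale $m_n=o(n)$ exists along which the exact-moment regime is still accessible; the quantitative work is entirely in the elementary variance computation and the Gaussian moment comparison.
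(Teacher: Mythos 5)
The paper does not prove this theorem at all; it is imported verbatim from Diaconis and Evans \cite{DE} and used as a black box, so your proposal has to stand on its own. Your tail estimate is fine: $\E(\Tr u_n^j\,\overline{\Tr u_n^k})=\delta_{jk}(j\wedge n)$ does hold exactly for all $j,k\geq1$ (the off-diagonal vanishing follows from invariance of Haar measure under $u\mapsto e^{\ii\theta}u$, the diagonal is the Diaconis--Shahshahani/Rains formula), so $\sum_{j>m_n}a_{nj}\Tr u_n^j\to0$ in $L^2$ by (\ref{eqn:condition2}). The gap is in your treatment of the head. You assert that $\sum_{j>m}|a_{nj}|^2(j\wedge n)$ is ``small uniformly in $n$ once $m$ is large, by (\ref{eqn:condition1})''. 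This is false: condition (\ref{eqn:condition1}) controls the total mass, not where it sits, and the hypotheses deliberately allow all of it to escape to frequencies $j\to\infty$ with $j=o(n)$. Take $a_{nj}=m_n^{-1/2}$ for $j=m_n=\lfloor\sqrt n\rfloor$ and $a_{nj}=0$ otherwise: then (\ref{eqn:condition1}) and (\ref{eqn:condition2}) hold with $\sigma^2=1$, yet the fixed-$m$ partial sum $\sum_{j\le m}a_{nj}\Tr u_n^j$ is identically $0$ for large $n$ and $\sum_{j>m}|a_{nj}|^2(j\wedge n)=1$ for all large $n$. Your three-way split would conclude that $\Tr(u_n^{\lfloor\sqrt n\rfloor})/\sqrt{\lfloor\sqrt n\rfloor}$ tends to $0$ in law, whereas the theorem (correctly) asserts it tends to a standard complex Gaussian. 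No diagonal or subsequence extraction repairs this: the fixed-$m$ truncations simply never see the limit.

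The repair is to run the method of moments directly on the full head $\sum_{j\le m_n}a_{nj}\Tr u_n^j$ with the \emph{growing} cutoff. For a fixed mixed moment of order $(a,b)$, every term in the multinomial expansion involves traces of total degree at most $(a+b)m_n$, which is eventually $\le n$ because $m_n/n\to0$; this is exactly the regime in which the Diaconis--Shahshahani exact moment formula applies, and it returns the Gaussian moment with variance $\sum_{j\le m_n}|a_{nj}|^2 j\to\sigma^2$. So the obstacle you flag (exponents growing with $m_n$) is not actually an obstacle for a fixed moment order; the hypothesis $m_n=o(n)$ is there precisely to make this computation legitimate, not merely to certify that some cutoff scale exists.
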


Thanks to the above result, to prove central limit theorems for class functions, we only need to decompose them on the basis
of the traces of successive powers. This is the method employed in the next subsections, where we treat separately
the cases $\epsilon_n\gg 1/n$ and $\epsilon_n\ll 1/n$.

\subsection{Proof of Theorem \ref{thm:jointRMT} for \boldmath$\epsilon_n\gg 1/n$\unboldmath.}

From the Cram\'er-Wald device\footnote{A Borel probability measure on $\RR^\ell$ is
uniquely determined by the family of its one-dimensional projections, that is the images of $\mu$ by
$(x_1,\dots,x_\ell)\mapsto\sum_{j=1}^\ell\lambda_j x_j$, for any vector $(\lambda_j)_{1\leq j\leq \ell}\in\RR^\ell$.}
a sufficient condition to prove Theorem \ref{thm:jointRMT} is that, for any
$(\mu_1,\dots,\mu_\ell)\in\CC^\ell$,
$$
\frac{1}{\sqrt{-\log\epsilon_n}}\sum_{k=1}^\ell\mu_k\log Z(e^{\epsilon_n+\ii\phi^{(k)}_n})=
-\sum_{j=1}^\infty\frac{1}{\sqrt{-\log\epsilon_n}}\left(\sum_{k=1}^\ell\frac{\mu_k}{je^{j(\epsilon_n+\ii\phi_n^{(k)})}}\right)\Tr(u_n^j)
$$
converges in law to a complex normal variable with mean 0 and variance
\begin{equation}\label{eqn:sigmasquare}
\sigma^2=\sum_{i=1}^\ell|\mu_i|^2+\sum_{s\neq t}\overline{\mu_s}\mu_t (c_{s,t}\wedge 1).
\end{equation}
We need to check conditions (\ref{eqn:condition1}) and (\ref{eqn:condition2}) from Theorem \ref{thm:DiaconisEvans}, with
$$a_{nj}=\frac{-1}{\sqrt{-\log\epsilon_n}}\left(\sum_{k=1}^\ell\frac{\mu_k}{je^{j(\epsilon_n+\ii\phi_n^{(k)})}}\right).$$
First, to calculate the limit of
$$\sum_{j=1}^\infty |a_{nj}|^2(j\wedge n)=
\sum_{j=1}^n  j|a_{nj}|^2+
n\sum_{j=n+1}^\infty |a_{nj}|^2,$$
note that this second term tends to 0 : if $a=(\sum_{k=1}^\ell|\mu_k|)^2$, then
$$
(-\log\epsilon_n)\,n\sum_{j=n+1}^\infty |a_{nj}|^2=n \sum_{j=n+1}^\infty \left|\sum_{k=1}^\ell\frac{\mu_k}{je^{j(\epsilon_n+\ii\phi_n^{(k)})}}\right|^2
\leq a\,n\sum_{j=n+1}^\infty\frac{1}{j^2}
\leq a
$$
so $n\sum_{j=n+1}^\infty |a_{nj}|^2\to 0$. The first term can be written
$$
(-\log\epsilon_n)\sum_{j=1}^n  j|a_{nj}|^2=\sum_{j=1}^n  j\left|\sum_{k=1}^\ell\frac{\mu_k}{j e^{j(\epsilon_n+\ii\phi_n^{(k)})}}\right|^2
=\sum_{s,t}\overline{\mu_s}\mu_t\sum_{j=1}^n\frac{1}{j}
\left(\frac{e^{\ii(\phi^{(s)}_n-\phi^{(t)}_n)}}{e^{2\epsilon_n}}\right)^j.
$$
Hence the expected limit is a consequence of the following lemma.

\begin{lem}\label{lem:RMT1}
Let $\epsilon_n\gg 1/n$, $\epsilon_n\to 0$,
$(\Delta_n)$ be a strictly positive sequence, bounded by $2\pi-\delta$ for some $\delta>0$, and
$\log\Delta_n/\log\epsilon_n\to c\in[0,\infty]$. Then
$$
\frac{1}{-\log\epsilon_n}\sum_{j=1}^n
\frac{e^{\ii j\Delta_n}}{je^{2j\epsilon_n}}\underset{n\to\infty}{\longrightarrow}c\wedge 1.
$$
\end{lem}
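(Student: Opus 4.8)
The plan is to recognise the sum as a truncated power series for $-\log(1-z)$ evaluated at $z=e^{\ii\Delta_n-2\epsilon_n}$, throw away the truncation tail, and then extract the logarithmic size of the resulting closed form $-\log(1-e^{\ii\Delta_n-2\epsilon_n})$.

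First I would bound the tail. Since $|e^{\ii\Delta_n-2\epsilon_n}|=e^{-2\epsilon_n}<1$,
$$\left|\sum_{j=n+1}^\infty\frac{e^{\ii j\Delta_n}}{je^{2j\epsilon_n}}\right|\leq\frac{1}{n+1}\sum_{j>n}e^{-2j\epsilon_n}\leq\frac{1}{(n+1)(1-e^{-2\epsilon_n})}=O\!\left(\frac{1}{n\epsilon_n}\right)=\o(1),$$
because $\epsilon_n\gg 1/n$; dividing by $-\log\epsilon_n\to\infty$ this term is negligible. It therefore suffices to study $\frac{1}{-\log\epsilon_n}\sum_{j=1}^\infty\frac{(e^{\ii\Delta_n-2\epsilon_n})^j}{j}=\frac{-\log(1-e^{\ii\Delta_n-2\epsilon_n})}{-\log\epsilon_n}$, the principal branch of the logarithm being unambiguous here since $1-e^{\ii\Delta_n-2\epsilon_n}$ has real part $1-e^{-2\epsilon_n}\cos\Delta_n\geq1-e^{-2\epsilon_n}>0$. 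The imaginary part of that logarithm is $-\arg(1-e^{\ii\Delta_n-2\epsilon_n})\in(-\pi/2,\pi/2)$, hence bounded, so after division by $-\log\epsilon_n$ it too vanishes, and the whole matter reduces to the behaviour of $-\log|1-e^{\ii\Delta_n-2\epsilon_n}|$.

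Next I would use the identity $|1-e^{\ii\Delta_n-2\epsilon_n}|^2=(1-e^{-2\epsilon_n})^2+4e^{-2\epsilon_n}\sin^2(\Delta_n/2)$ and show this quantity is comparable to $\epsilon_n^2+\Delta_n^2$, with constants uniform in $n$: indeed $(1-e^{-2\epsilon_n})^2\asymp\epsilon_n^2$ and $4e^{-2\epsilon_n}\asymp1$ as $\epsilon_n\to0$, while $\sin^2(\Delta_n/2)\asymp\Delta_n^2$ because $\theta\mapsto\sin^2(\theta/2)/\theta^2$ is continuous and strictly positive on $(0,2\pi-\delta]$ with limit $1/4$ at $0$ — this is the one point where the hypothesis $\Delta_n\leq2\pi-\delta$ is used, keeping $\Delta_n/2$ away from $\pi$. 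Consequently
$$-\log\bigl|1-e^{\ii\Delta_n-2\epsilon_n}\bigr|=-\tfrac12\log(\epsilon_n^2+\Delta_n^2)+O(1)=-\log\max(\epsilon_n,\Delta_n)+O(1)=\min(-\log\epsilon_n,-\log\Delta_n)+O(1),$$
uniformly in $n$. Dividing by $-\log\epsilon_n$, which is positive and tends to $+\infty$, gives
$$\frac{1}{-\log\epsilon_n}\sum_{j=1}^n\frac{e^{\ii j\Delta_n}}{je^{2j\epsilon_n}}=\min\!\left(1,\frac{\log\Delta_n}{\log\epsilon_n}\right)+\o(1)\underset{n\to\infty}{\longrightarrow}c\wedge1,$$
using $\log\Delta_n/\log\epsilon_n\to c$ and continuity of $\min$; note that this single line covers every $c\in[0,\infty]$ at once (for $c=\infty$ the minimum is simply $1$).

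The step deserving the most care — and essentially the only one that is not bookkeeping — is the two-sided comparison $|1-e^{\ii\Delta_n-2\epsilon_n}|\asymp\max(\epsilon_n,\Delta_n)$: one must verify that the implied constants remain bounded away from $0$ and from $\infty$ along the whole sequence, which is exactly what the uniform bound $\Delta_n\leq2\pi-\delta$ (together with $\epsilon_n\to0$) provides.
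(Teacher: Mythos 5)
Your proof is correct and follows essentially the same route as the paper: the same decomposition of the partial sum as $-\log\left(1-e^{-2\epsilon_n+\ii\Delta_n}\right)$ plus a negligible truncation tail, with the asymptotics of the logarithm then extracted in a slightly cleaner, uniform way (the two-sided estimate $-\log\left|1-e^{\ii\Delta_n-2\epsilon_n}\right|=\min(-\log\epsilon_n,-\log\Delta_n)+O(1)$) in place of the paper's case analysis on $c$. One cosmetic remark: under the paper's Vinogradov convention, $\epsilon_n\gg 1/n$ only gives $n\epsilon_n$ bounded below, so your tail bound is $O(1)$ rather than $\o(1)$ --- but that is all you actually use, since you divide by $-\log\epsilon_n\to\infty$ in the next step.
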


\begin{proof}
The Taylor expansion of $\log (1-X)$ for $|X|<1$ gives
$$
\sum_{j=1}^n
\frac{e^{\ii j\Delta_n}}{je^{2j\epsilon_n}}=
-\underbrace{\log\left(1-e^{-2\epsilon_n+\ii\Delta_{n}}\right)}_{(1)}
-\underbrace{\sum_{j=n+1}^{\infty}\frac{e^{\ii j\Delta_{n}}}{je^{2j\epsilon_n}}}_{(2)}.
$$
As $\epsilon_n>d/n$ for some constant $d>0$,
$$
|(2)|\leq \sum_{j=n+1}^\infty\frac{1}{j e^{2j\epsilon_n}}
\leq \sum_{j=n+1}^\infty\frac{1}{j e^{d\frac{j}{n}}}
\underset{n\to\infty}{\longrightarrow}\int_0^\infty\frac{\dd x}{(1+x)e^{d(1+x)}},
$$
so (2), divided by $\log\epsilon_n$, tends to 0.

We now look at the main contribution, coming from (1).
If $c>1$, then $\Delta_n=\o(\epsilon_n)$, so (1) is equivalent to $\log\epsilon_n$ as $n\to\infty$.
If $0< c<1$, then $\epsilon_n=\o(\Delta_n)$ so (1) is equivalent to $\log\Delta_n$, hence to
$c\log \epsilon_n$.
If $c=1$, (1) is equivalent to $(\log \epsilon_n)\1_{\epsilon_n\geq\Delta_n}
+(\log \Delta_n)\1_{\Delta_n>\epsilon_n}$, that is to say $\log \epsilon_n$.
Finally, if $c=0$, as $(\epsilon_n)^a\ll\Delta_n<2\pi-\delta$ for all $a>0$, $(1)=\o(\log\epsilon_n)$.
\end{proof}


The condition (\ref{eqn:condition2}) in Theorem \ref{thm:DiaconisEvans} remains to be shown.
Since we have already shown that $n\sum_{j=n+1}^\infty |a_{nj}|^2\to 0$, we look for a sequence $(m_n)$ with
$m_n/n\to 0$ and
$\sum_{j=m_n+1}^n j |a_{nj}|^2\to 0$. Writing as previously $a=(\sum_{k=1}^\ell|\mu_k|)^2$, then
$$
\sum_{j=m_n+1}^n j |a_{nj}|^2\leq \frac{a}{-\log\epsilon_n}\sum_{j=m_n+1}^n\frac{1}{j}.
$$
Hence any sequence $(m_n)$ with $m_n=\o(n)$, $(\log n-\log(m_n))/\log\epsilon_n\to 0$ is convenient, for example
$m_n=\lfloor n/(-\log\epsilon_n)\rfloor$.

\subsection{Proof of Theorem \ref{thm:jointRMT} for \boldmath$\epsilon_n\ll 1/n$\unboldmath.}

We now need to check conditions (\ref{eqn:condition1}) and (\ref{eqn:condition2})
with
$$a_{nj}=\frac{-1}{\sqrt{\log n}}\left(\sum_{k=1}^\ell\frac{\mu_k}{je^{j(\epsilon_n+\ii\phi_n^{(k)})}}\right)$$
and $\sigma^2$ as in (\ref{eqn:sigmasquare}).
In the same way as the previous paragraph, $n\sum_{j=n+1}^\infty |a_{nj}|^2\to 0$, and
(\ref{eqn:condition2}) holds with $m_n=\lfloor n/\log n\rfloor$.
So the last thing to prove is
$$
\sum_{j=1}^n j |a_{nj}|^2
=\sum_{s,t}\overline{\mu}_s\mu_t\frac{1}{\log n}\sum_{j=1}^n\frac{1}{j}\left(\frac{e^{\ii(\phi^{(s)}_n-\phi^{(t)}_n)}}{e^{2\epsilon_n}}\right)^j
\underset{n\to\infty}{\longrightarrow} \sigma^2,
$$
that is to say, writing $x_n=e^{-2\epsilon_n+\ii(\phi^{(s)}_n-\phi^{(t)}_n)}$,
$$\frac{1}{\log n}\sum_{j=1}^n\frac{x_n^j}{j}\underset{n\to\infty}{\longrightarrow}  c_{s,t} \wedge 1.$$

First note that with no restriction we can suppose $\epsilon_n=0$. Indeed, if we
write $y_n=e^{\ii(\phi^{(s)}_n-\phi^{(t)}_n)}$, and $\epsilon_n\leq b/n$ for some $b>0$
(since $\epsilon_n\ll 1/n$),
$$
\left|\sum_{j=1}^n\frac{x_n^j}{j}-\sum_{j=1}^n\frac{y_n^j}{j}\right|\leq
\sum_{j=1}^n\frac{1}{j}\left|e^{-b\frac{j}{n}}-1\right|\leq b
$$
because $|e^{-x}-1|\leq x$ for $x\geq 0$. The asymptotics of $\sum_{j=1}^n\frac{y_n^j}{j}$
are given in the next lemma, which concludes the proof.

\begin{lem}\label{lem:RMT2}
Let $(\Delta_n)$ be a strictly positive sequence, bounded by $2\pi-\delta$ for some $\delta>0$, such that $-\log\Delta_n/\log n\to c\in[0,\infty]$.
Then
$$
\frac{1}{\log n}\sum_{j=1}^n\frac{e^{\ii j\Delta_n}}{j}\underset{n\to\infty}{\longrightarrow}c\wedge 1.
$$
\end{lem}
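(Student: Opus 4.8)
To prove Lemma~\ref{lem:RMT2} I would split the partial sum
$\sum_{j=1}^n e^{\ii j\Delta_n}/j$ into a ``main'' part that behaves like $-\log(1-e^{\ii\Delta_n})$ and a ``tail'' part that is negligible after division by $\log n$; this is the same strategy as in the proof of Lemma~\ref{lem:RMT1}, except that here there is no damping factor $e^{-2j\epsilon_n}$, so the geometric tail does not converge and a slightly more careful estimate is needed. Concretely, write
\[
\sum_{j=1}^n\frac{e^{\ii j\Delta_n}}{j}
= -\log\!\left(1-e^{\ii\Delta_n}\right)
 - \sum_{j=n+1}^\infty\frac{e^{\ii j\Delta_n}}{j},
\]
valid because $\Delta_n\in(0,2\pi-\delta)$ so $e^{\ii\Delta_n}\neq 1$ and the full series converges (conditionally).

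First I would control the tail $R_n:=\sum_{j>n}e^{\ii j\Delta_n}/j$. Abel summation against the partial sums of $\sum e^{\ii j\Delta_n}$, which are bounded by $|1-e^{\ii\Delta_n}|^{-1}\le (2/\pi)\,(\Delta_n\wedge(2\pi-\delta))^{-1}$ in modulus, gives $|R_n|\ll 1/(n\Delta_n)$. Now if $c<\infty$ then $\Delta_n\gg n^{-c'}$ for any $c'>c$, so $|R_n|\ll n^{c'-1}=o(\log n)$ for $c'<1$, and in fact $|R_n|=o(\log n)$ holds whenever $c\le 1$; when $c>1$ one checks directly, as below, that the main term already dominates. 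Dividing by $\log n$ kills the tail in all cases $c\neq\infty$; for $c=\infty$, $\Delta_n$ can be super-polynomially small and one instead bounds $|R_n|\le\sum_{j>n}1/j\cdot\text{(nothing)}$ — here it is cleaner to note $|R_n|\le \min\{2/(n\Delta_n),\ \text{const}\}$ is immaterial because the main term $-\log(1-e^{\ii\Delta_n})$ is then of order $-\log\Delta_n\gg\log n$ and will be shown to be $\sim(\log n)(c\wedge 1)=\log n$, swamping any $O(1/(n\Delta_n))$ or even $O(\log n)$ error — so I would organize the argument so that the tail is compared to the main term rather than to $\log n$ directly.

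Second, the main term: $-\log(1-e^{\ii\Delta_n})$. Since $\Delta_n\to 0$ (in the only interesting regime; if $\Delta_n$ stays bounded away from $0$ then $c=0$ and the whole expression is $O(1)=o(\log n)$, giving the limit $0=0\wedge 1$), we have $1-e^{\ii\Delta_n}\sim -\ii\Delta_n$, hence
$-\log(1-e^{\ii\Delta_n}) = -\log\Delta_n + O(1)$. By hypothesis $-\log\Delta_n/\log n\to c$, so $-\log(1-e^{\ii\Delta_n})/\log n\to c$. Combining with the tail estimate: when $c\le 1$ the tail is $o(\log n)$ and the limit is $c=c\wedge 1$; when $c>1$ (including $c=\infty$) the main term is $\sim c\log n$ with $c>1$, but we must still get $c\wedge1=1$, which means the error terms must \emph{not} be negligible — this is the delicate point. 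Indeed here one cannot first extend the sum to infinity, since for $c>1$ the quantity $-\log(1-e^{\ii\Delta_n})\sim -\log\Delta_n$ overshoots; the resolution is that the truncation at $j=n$ is essential: $\sum_{j=1}^n e^{\ii j\Delta_n}/j$ is a partial sum, and for $\Delta_n\ll 1/n$ (the case $c>1$) every term $e^{\ii j\Delta_n}$ with $j\le n$ satisfies $j\Delta_n\to 0$, so $\sum_{j=1}^n e^{\ii j\Delta_n}/j = \sum_{j=1}^n 1/j + \sum_{j=1}^n(e^{\ii j\Delta_n}-1)/j = \log n + O(1) + O(n\Delta_n) = \log n + o(\log n)$, using $|e^{\ii j\Delta_n}-1|\le j\Delta_n$ and $\sum_{j\le n}j\Delta_n/j = n\Delta_n = o(1)$. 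That gives the limit $1 = c\wedge 1$.

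**The main obstacle.** The crossover at $c=1$, and more generally correctly identifying which case one is in, is the subtlety: for $c>1$ one argues via the Taylor bound $|e^{\ii j\Delta_n}-1|\le j\Delta_n$ on the truncated sum (the series to infinity would give the wrong answer $c$), whereas for $c<1$ one argues via $-\log(1-e^{\ii\Delta_n})\sim\log\Delta_n$ and the tail is genuinely negligible. At $c=1$ either $\Delta_n n\to 0$, $\to\infty$, or oscillates, and one must check both representations agree and give the value $1$; the cleanest route is to bound $\bigl|\sum_{j=1}^n e^{\ii j\Delta_n}/j - \log n\bigr| \le \min\{\,C\log(2+n\Delta_n),\ C + C\,n\Delta_n\,\}$ uniformly (the first bound from $|R_n|\ll 1/(n\Delta_n)$ plus $-\log(1-e^{\ii\Delta_n})=-\log\Delta_n+O(1)$ and $\log n - (-\log\Delta_n) = \log(n\Delta_n)$; the second from the truncated Taylor estimate), and observe that for any sequence $\Delta_n$ with $-\log\Delta_n/\log n\to 1$ this difference is $o(\log n)$. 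I would therefore structure the proof around this single two-sided estimate, from which all five cases $c\in\{0\}\cup(0,1)\cup\{1\}\cup(1,\infty)\cup\{\infty\}$ follow by inspection, mirroring the case analysis already used in Lemma~\ref{lem:RMT1}.
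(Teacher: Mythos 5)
Your proposal is correct, but it takes a genuinely different route from the paper's for the main case $c>0$. You write the partial sum as the boundary value $-\log(1-e^{\ii\Delta_n})$ of the Taylor series minus the tail $R_n=\sum_{j>n}e^{\ii j\Delta_n}/j$, control $R_n$ by Abel summation ($|R_n|\ll 1/(n\Delta_n)$), and switch to the direct truncated estimate $\sum_{j\leq n}(e^{\ii j\Delta_n}-1)/j=O(n\Delta_n)$ when $n\Delta_n$ is small; this is essentially the strategy of the paper's proof of Lemma \ref{lem:RMT1}, adapted to the undamped case. The paper instead compares $\sum_{j\leq n}e^{\ii j\Delta_n}/j$ to the integral $\int_{\Delta_n}^{(n+1)\Delta_n}e^{\ii t}t^{-1}\dd t$ (with error $O(\Delta_n\log n)+O(1)$) and reads off the three regimes $n\Delta_n\to 0$, $n\Delta_n\to\infty$, and the mixed case from the asymptotics of that integral, reserving Abel summation for $c=0$ where $\Delta_n$ need not vanish. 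Your approach buys a closed-form main term and a single two-sided estimate from which all cases follow; the paper's buys a slightly cleaner treatment of the crossover, since the integral representation automatically truncates at $(n+1)\Delta_n$. One caution: your intermediate assertion that $|R_n|=o(\log n)$ ``whenever $c\le 1$'' does not follow from $|R_n|\ll 1/(n\Delta_n)$ alone (take $\Delta_n=1/(n\log^2 n)$, which has $c=1$ but $1/(n\Delta_n)=\log^2 n$); your final organization avoids this because for $n\Delta_n\leq 1$ you use the truncated Taylor bound, which never invokes $R_n$, and for $n\Delta_n\geq 1$ the tail is $O(1)$ — but the write-up should make clear that the first member of your $\min$ is only invoked (and only valid) in the regime $n\Delta_n\gtrsim 1$.
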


\begin{proof}
We successively treat the cases $c>0$ and $c=0$.
Suppose first that $c>0$. By comparison between the Riemann sum
and the corresponding integral,
\begin{eqnarray*}
\left|\sum_{j=1}^n\frac{e^{\ii j \Delta_n}}{j}-\int_{\Delta_n}^{(n+1)\Delta_n}\frac{e^{\ii t}}{t}\dd t\right|
&\leq&
\sum_{j=1}^n\int_{j\Delta_n}^{(j+1)\Delta_n}\left(\frac{\left|e^{\ii j\Delta_n}-e^{\ii t}\right|}{j\Delta_n}
+\left|\frac{e^{\ii t}}{j\Delta_n}-\frac{e^{\ii t}}{t}\right|\right)\dd t\\
&\leq&\sum_{j=1}^n\frac{\Delta_n}{j}+\sum_{j=1}^n\left(\frac{1}{j}-\frac{1}{j+1}\right)\\
&\leq&\Delta_n(\log n+1)+1.
\end{eqnarray*}
As $c>0$, $\Delta_n\to 0$ so $\frac{1}{\log n}\sum_{j=1}^n\frac{e^{\ii j \Delta_n}}{j}$
has the same limit as $\frac{1}{\log n}\int_{\Delta_n}^{(n+1)\Delta_n}\frac{e^{\ii t}}{t}\dd t$ as $n\to\infty$.
If $c>1$, $n\Delta_n\to 0$ so we easily get
$$
\frac{1}{\log n}\int_{\Delta_n}^{(n+1)\Delta_n}\frac{e^{\ii t}}{t}\dd t\underset{n\to\infty}{\sim}
\frac{1}{\log n}\int_{\Delta_n}^{(n+1)\Delta_n}\frac{\dd t}{t}=\frac{\log (n+1)}{\log n}\underset{n\to\infty}{\longrightarrow}1.
$$
If $0<c<1$, $n\Delta_n\to\infty$. As $\sup_{x>1}\left|\int_1^x\frac{e^{\ii t}}{t}\dd t\right|<\infty$,
$$
\frac{1}{\log n}\int_{\Delta_n}^{(n+1)\Delta_n}\frac{e^{\ii t}}{t}\dd t\underset{n\to\infty}{\sim}
\frac{1}{\log n}\int_{\Delta_n}^{1}\frac{e^{\ii t}}{t}\dd t\underset{n\to\infty}{\sim}
\frac{1}{\log n}\int_{\Delta_n}^{1}\frac{\dd t}{t}\underset{n\to\infty}{\longrightarrow} c.
$$
If $c=1$, a distinction between the cases $n\Delta_n\leq 1$, $n\Delta_n> 1$ and the above reasoning gives 1 in the limit.\\

If $c=0$, $\Delta_n$ does not necessarily converge to 0 anymore so another method is required. An elementary summation gives
$$
\sum_{j=1}^n\frac{e^{\ii j \Delta_n}}{j}=\sum_{k=1}^n\left(\frac{1}{k}-\frac{1}{k+1}\right)\sum_{j=1}^k e^{\ii j\Delta_n}
+\frac{1}{n+1}\sum_{j=1}^n e^{\ii j\Delta_n}.
$$
We will choose a sequence $(a_n)$ ($1\leq a_n\leq n$) and bound $\sum_{j=1}^k e^{\ii j\Delta_n}$ by $k$ if $k< a_n$,
by $|(e^{\ii k\Delta_n}-1)/(e^{\ii \Delta_n}-1)|\leq 2/|e^{\ii \Delta_n}-1|$ if $a_n\leq k\leq n$. This yields
$$
\sum_{j=1}^n\frac{e^{\ii j \Delta_n}}{j}\leq \sum_{k=1}^{a_n-1}\frac{1}{k+1} +\frac{2}{|e^{\ii\Delta_n}-1|}
\sum_{k=a_n}^n\left(\frac{1}{k}-\frac{1}{k+1}\right)+1
\leq \log a_n+\frac{2}{a_n|e^{\ii\Delta_n}-1|}+1.
$$
As $\Delta_n<2\pi-\delta$, there is a constant $\lambda>0$ with $|e^{\ii\Delta_n}-1|>\lambda \Delta_n$. So
the result follows if we can find a sequence ($a_n$) such that $\frac{\log a_n}{\log n}\to 0$ and $a_n\Delta_n\log n\to\infty$,
which is true for $a_n=\lfloor 2\pi/\Delta_n\rfloor$.
\end{proof}

\section{The central limit theorem for $\zeta$}

\subsection{Selberg's method.}
Suppose the Euler product of $\zeta$ holds for $1/2\leq \Re(s)\leq 1$ (this is a conjecture) : then
$\log\zeta(s)=-\sum_{p\in\mathcal{P}}\log (1-p^{-s})$ can be approximated by
$\sum_{p\in\mathcal{P}}p^{-s}$. Let $s=1/2+\epsilon_t+\ii \omega t$ with $\omega$ uniform on $(0,1)$.
As the $\log p$'s are linearly independent over $\QQ$, the terms $\{p^{-\ii \omega t}\mid p\in\mathcal{P}\}$
can be viewed as independent uniform random variables on the unit circle as $t\to\infty$, hence it was a natural thought
that a central limit theorem might hold for $\log\zeta(s)$, which was indeed shown by Selberg \cite{SelbergCLTOriginalPaper}.

The crucial point to get such arithmetical central limit theorems
is the approximation by sufficiently short Dirichlet series.
Selberg's ideas to approximate $\log\zeta$ appear in Goldston \cite{Goldston},
Joyner \cite{Joyner}, Tsang \cite{Tsang} or
Selberg's original paper \cite{SelbergCLTOriginalPaper}.
More precisely, the explicit formula for $\zeta'/\zeta$,
by Landau, gives such an approximation ($x>1$, $s$ distinct from 1, the zeros $\rho$ and $-2n$, $n\in\NN$) :
$$
\frac{\zeta'}{\zeta}(s)=-\sum_{n\leq x}\frac{\Lambda(n)}{n^s}+\frac{x^{1-s}}{1-s}-\sum_\rho \frac{x^{\rho-s}}{\rho-s}+\sum_{n=1}^\infty
\frac{x^{-2n-s}}{2n+s},
$$
from which we get an approximate formula for $\log \zeta (s)$ by integration. However, the
sum over the zeros is not absolutely convergent, hence this formula is not sufficient.
Selberg found a slight change in the above formula, that makes a great difference because all
infinite sums are now absolutely convergent : under the above hypotheses, if
$$
\Lambda_x(n)=
\left\{
\begin{array}{cc}
\Lambda(n)&\mbox{for}\ 1\leq n\leq x,\\
\Lambda(n)\frac{\log\frac{x^2}{n}}{\log n}&\mbox{for}\ x\leq n\leq x^2,
\end{array}
\right.
$$
then
\begin{multline*}
\frac{\zeta'}{\zeta}(s)=-\sum_{n\leq x^2}\frac{\Lambda_x(n)}{n^s}+\frac{x^{2(1-s)}-x^{1-s}}{(1-s)^2\log x}
+\frac{1}{\log x}\sum_\rho \frac{x^{\rho-s}-x^{2(\rho-s)}}{(\rho-s)^2}\\+
\frac{1}{\log x}\sum_{n=1}^\infty
\frac{x^{-2n-s}-x^{-2(2n+s)}}{(2n+s)^2}.
\end{multline*}
Assuming the Riemann hypothesis, the above formulas give a simple expression for $(\zeta'/\zeta)(s)$ for $\Re(s)\geq 1/2$ :
for $x\to\infty$, all terms in the infinite sums converge to 0 because $\Re(\rho-s)<0$.
By subtle arguments, Selberg showed that,
although RH is necessary for the {\it almost sure} coincidence between $\zeta'/\zeta$ and its Dirichlet series,
it is not required in order to get {\it a good $L^k$ approximation}. In particular, Selberg \cite{SelbergCLTOriginalPaper} (see also Joyner \cite{Joyner}
for similar results for more general L-functions) proved that for any $k\in\NN^*$, $0<a<1$,
there is a constant $c_{k,a}$ such that for any $1/2\leq \sigma\leq 1$, $t^{a/k}\leq x\leq t^{1/k}$,
$$
\frac{1}{t}\int_{1}^t\left|\log\zeta(\sigma+\ii s)-\sum_{p\leq x}\frac{p^{-\ii s}}{p^\sigma}\right|^{2k}\dd s\leq c_{k,a}.
$$
In the following, we only need the case $k=1$ in the above formula : with the notations of Theorem \ref{thm:jointNT}
($\omega$ uniform on $(0,1)$),
$$
\log\zeta\left(\frac{1}{2}+\epsilon_t+\ii f^{(j)}_t+\ii \omega t\right)-\sum_{p\leq t}\frac{p^{-\ii \omega t}}{p^{\frac{1}{2}+\epsilon_t+\ii f^{(j)}_t}}
$$
is bounded in $L^2$, and after normalization by $\frac{1}{-\log\epsilon_t}$ or $\frac{1}{\log\log t}$,
it converges in probability to 0. Hence, Slutsky's lemma and the Cramér-Wald device allow us to reformulate Theorem \ref{thm:jointNT}
in the following way.

\newtheorem*{Equivalent}{Equivalent of Theorem \ref{thm:jointNT}}
\begin{Equivalent}
Let $\omega$ be uniform on $(0,1)$, $\epsilon_t\to 0$, $\epsilon_t\gg 1/\log t$, and functions
$0\leq f^{(1)}_t< \dots< f^{(\ell)}_t<c<\infty$.
Suppose (\ref{eqn:conditionNT}).
Then for any finite set of complex numbers $\mu_1,\dots,\mu_\ell$,
\begin{equation}\label{eqn:equivalent}
\frac{1}{\sqrt{-\log\epsilon_t}}
\sum_{j=1}^\ell\mu_j \sum_{p\leq t}\frac{p^{-\ii \omega t}}{p^{\frac{1}{2}+\epsilon_t+\ii f^{(j)}_t}}
\end{equation}
converges in law to a complex Gaussian variable with mean 0 and variance
$$
\sigma^2=\sum_{j=1}^\ell|\mu_j|^2+\sum_{j\neq k} \overline{\mu_j}\mu_k(1\wedge c_{j,k}).
$$
If $\epsilon_t\ll 1/\log t$, then the same result holds with normalization
$1/\sqrt{\log\log t}$ instead of $1/\sqrt{-\log\epsilon_t}$ in (\ref{eqn:equivalent}) and (\ref{eqn:conditionNT}).
\end{Equivalent}

To prove this convergence in law, we need a number-theoretic analogue of Theorem \ref{thm:DiaconisEvans},
stated in the next paragraph.

\subsection{An analogue of the Diaconis-Evans theorem.}

Heuristically, the following proposition stems
from the linear independence of the $\log p$'s over $\QQ$, and the main tool to prove it is
the Montgomery-Vaughan theorem.

Note that, generally, convergence to normal variables in a number-theoretic context
is proved thanks to the convergence of all moments (see e.g. \cite{HNY}). The result below is a
tool showing that  testing the $L^2$-convergence is sufficient.

\begin{prop}\label{prop:DiacShahNT}
Let $a_{pt}$ ($p\in\mathcal{P},t\in\RR^+$) be complex numbers with $\sup_{p}|a_{pt}|\to 0$ and
$\sum_p |a_{pt}|^2\to\sigma^2$ as $t\to\infty$.
Suppose also the existence of $(m_t)$ with $\log m_t/\log t\to 0$ and
\begin{equation}\label{eqn:condition2NT}
\sum_{p>m_t} |a_{pt}|^2 \left(1+\frac{p}{t}\right)\underset{t\to\infty}{\longrightarrow}0.
\end{equation}
Then, if $\omega$ is a uniform random variable on $(0,1)$,
$$\sum_{p\in\mathcal{P}} a_{pt} p^{-\ii \omega t}\convlaw\sigma Y$$
as $t\to\infty$, $Y$ being a standard complex normal variable.
\end{prop}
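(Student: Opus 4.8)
The plan is to mimic the proof of Theorem~\ref{thm:DiaconisEvans}, using unique factorization of the integers in the role played there by the orthogonality of $\Tr(u^j)$, and with the Montgomery--Vaughan mean value theorem as the substitute for the trivial bound $\int_0^1|\Tr(u^j)|^2 = j\wedge n$. Write $S_t=\sum_p a_{pt}p^{-\ii\omega t}=S_t'+R_t$, where $S_t'=\sum_{p\le m_t}a_{pt}p^{-\ii\omega t}$ is the ``short sum'' and $R_t=\sum_{p>m_t}a_{pt}p^{-\ii\omega t}$. The first step is to show $R_t\to 0$ in $L^2$, so that by Slutsky's lemma it suffices to treat $S_t'$; the second step is a central limit theorem for $S_t'$ by the method of moments.

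For the tail, since $\omega$ is uniform on $(0,1)$,
$$
\E|R_t|^2=\frac{1}{t}\int_0^t\left|\sum_{p>m_t}a_{pt}p^{-\ii s}\right|^2\dd s,
$$
and the Montgomery--Vaughan theorem, in the form $\frac1T\int_0^T\left|\sum_n b_n n^{-\ii s}\right|^2\dd s=\sum_n|b_n|^2\bigl(1+O(n/T)\bigr)$ with an absolute implied constant, gives $\E|R_t|^2\ll\sum_{p>m_t}|a_{pt}|^2(1+p/t)$, which tends to $0$ by hypothesis~(\ref{eqn:condition2NT}). The weight $1+p/t$ in (\ref{eqn:condition2NT}) is thus exactly the Montgomery--Vaughan weight, just as $j\wedge n$ in (\ref{eqn:condition2}) is the weight in the unitary case.

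For the short sum, since a complex standard normal variable is determined by its moments, it is enough to prove that for all fixed integers $a,b\ge 0$,
$$
\E\bigl[(S_t')^a\,\overline{(S_t')}^{\,b}\bigr]\underset{t\to\infty}{\longrightarrow}\E\bigl[(\sigma Y)^a\,\overline{(\sigma Y)}^{\,b}\bigr]=\delta_{a,b}\,a!\,\sigma^{2a}.
$$
Expanding, $\E[(S_t')^a\overline{(S_t')}^{\,b}]$ is a sum over tuples $(p_1,\dots,p_a,q_1,\dots,q_b)$ of primes $\le m_t$ of $a_{p_1t}\cdots a_{p_at}\,\overline{a_{q_1t}}\cdots\overline{a_{q_bt}}$ times $\frac1t\int_0^t\bigl(p_1\cdots p_a/(q_1\cdots q_b)\bigr)^{-\ii s}\dd s$, and this last integral equals $1$ when $p_1\cdots p_a=q_1\cdots q_b$ and is $\le 2/\bigl(t\,|\log(p_1\cdots p_a/(q_1\cdots q_b))|\bigr)$ otherwise. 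By unique factorization the ``diagonal'' case forces $a=b$ and $(q_j)$ to be a rearrangement of $(p_i)$; collecting these terms and discarding the contribution of tuples with a repeated prime, which is $\o(1)$ because $\sum_p|a_{pt}|^4\le(\sup_p|a_{pt}|^2)\sum_p|a_{pt}|^2\to 0$, yields the main term $\delta_{a,b}\,a!\,\bigl(\sum_p|a_{pt}|^2\bigr)^a\to\delta_{a,b}\,a!\,\sigma^{2a}$. For an off-diagonal tuple, $p_1\cdots p_a/(q_1\cdots q_b)$ is a ratio of two distinct positive integers each $\le m_t^{\max(a,b)}$, hence its logarithm is $\gg m_t^{-\max(a,b)}$ in absolute value; combined with $\sum_{p\le m_t}|a_{pt}|\le\sqrt{\pi(m_t)}\,\bigl(\sum_p|a_{pt}|^2\bigr)^{1/2}$, the total off-diagonal contribution is $\ll m_t^{\max(a,b)+(a+b)/2}/t$, which tends to $0$ since $\log m_t/\log t\to 0$ forces $m_t=t^{\o(1)}$ while $a,b$ stay fixed.

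The main obstacle is this off-diagonal estimate: one must quantify how close to $1$ a nontrivial ratio of products of primes below $m_t$ can be and then check that the resulting loss of a fixed power of $m_t$ is absorbed by the saving $1/t$ coming from the oscillatory integral. This is precisely where the hypothesis $\log m_t/\log t\to 0$ enters --- the number-theoretic analogue of $m_n/n\to 0$ in Theorem~\ref{thm:DiaconisEvans} --- and the only point requiring genuine care in the bookkeeping of the number of tuples against the size of the coefficients $a_{pt}$.
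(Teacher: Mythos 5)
Your proposal is correct and follows the same architecture as the paper's proof: Montgomery--Vaughan to kill the tail $p>m_t$ in $L^2$, then the method of moments for the short sum, with the condition $\log m_t/\log t\to 0$ absorbing a fixed power of $m_t$ into the $1/t$ saving from the oscillatory integral. The one genuine (and welcome) difference is in how the limiting moments are identified. The paper first invokes Petrov's central limit theorem for the independent-phases model $\sum_{p\le m_t}a_{pt}e^{\ii\omega_p}$ and then shows that the moments of $\sum_{p\le m_t}a_{pt}p^{-\ii\omega t}$ agree asymptotically with those of that model, by comparing a time average along the orbit $s\mapsto s(\log p_1,\dots,\log p_{n_t})$ with the space average on the torus; the error is controlled by $\sum_k|u^{(t)}_{a,b}(k)|$ and a lower bound $|k\cdot p^{(t)}|\ge n_t^{-2\max(a,b)}$ (Lemma \ref{lem:arith}), which is exactly your separation estimate for logarithms of distinct products of primes below $m_t$. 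You instead evaluate the diagonal directly --- unique factorization forces $a=b$ and a matching of the multisets, the repeated-prime terms vanish because $\sum_p|a_{pt}|^4\to 0$, and the diagonal yields $\delta_{a,b}\,a!\,\sigma^{2a}$ --- so you never need Petrov's theorem. This is self-contained and slightly cleaner; the paper's detour through the independent model makes explicit the heuristic that the $p^{-\ii\omega t}$ behave like independent uniform phases, which is conceptually the point being proved. Both arguments rest on the same two quantitative inputs, so I regard your proof as a valid, mildly streamlined variant of the paper's.
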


\begin{rem}
The condition $m_n=\o(n)$ in Theorem \ref{thm:DiaconisEvans} is replaced here
by $\log m_t=\o(\log t)$. A systematic substitution $n\leftrightarrow \log t$
would give the stronger condition $m_t/\log m_t=\o(\log t)$ : the above proposition gives a better result
than the one expected from the analogy between random matrices and number theory.
\end{rem}

\begin{proof}
Condition  (\ref{eqn:condition2NT}) first allows to restrict the infinite sum over the set of primes $\mathcal{P}$
to the finite sum over $\mathcal{P}\cap[2,m_t]$. More precisely,
following \cite{MontgomeryVaughan}, let $(a_r)$ be complex numbers, $(\lambda_r)$ distinct real numbers and
$$
\delta_r=\min_{s\neq r}|\lambda_r-\lambda_s|.
$$
The Montgomery-Vaughan theorem  states that
$$
\frac{1}{t}\int_{0}^t\left|\sum_r a_r e^{\ii \lambda_r s}\right|^2\dd s
=
\sum_r|a_r|^2\left(1+\frac{3\pi\theta}{t\delta_r}\right)
$$
for some $\theta$ with $|\theta|\leq 1$. We substitute above $a_r$ by $a_{pt}$ and $\lambda_r$ by $\log p$,
and restrict the sum to the $p$'s greater than $m_t$ : there is a constant $c>0$ independent of $p$ with
$
\min_{p'\neq p}|\log p-\log p'|>\frac{c}{p}
$, so
$$
\frac{1}{t}\int_{0}^t\left|\sum_{p>m_t} a_{pt} p^{-\ii s}\right|^2\dd s
\leq
\sum_p|a_{pt}|^2\left(1+c'\frac{ p}{t}\right)
$$
with $c'$ bounded by $3\pi c$. Hence the hypothesis (\ref{eqn:condition2NT}) implies that $\sum_{p>m_t}a_{pt}p^{-\ii \omega t}$
converges to $0$ in $L^2$, so by Slutsky's lemma it is sufficient to show that
\begin{equation}\label{eqn:toProve}
\sum_{p\leq m_t} a_{pt}p^{-\ii \omega t}\convlaw\sigma Y.
\end{equation}
As $\sum_{p\leq m_t}|a_{pt}|^2\to\sigma^2$ and $\sup_{p\leq m_t}|a_{pt}|\to 0$,
Theorem 4.1 in Petrov \cite{Petrov} gives the following central limit theorem :
\begin{equation}\label{eqn:Proved}
\sum_{p\leq m_t} a_{pt}e^{\ii\omega_p}\convlaw\sigma Y,
\end{equation}
where the $\omega_p$'s are independent uniform random variables on $(0,2\pi)$.
The $\log p$'s being linearly independent over $\QQ$, it is well known that as $t\to\infty$
any given finite number of the $p^{\ii \omega t}$'s are asymptotically independent and uniform on the unit circle.
The problem here is that the number of these random variables increases as they become independent.
If this number increases sufficiently slowly ($\log m_t/\log t\to 0$), one can expect that
(\ref{eqn:Proved}) implies (\ref{eqn:toProve}).

The method of moments tells us that , in order to prove the central limit theorem (\ref{eqn:toProve}),
it is sufficient to show for all positive integers
$a$ and $b$ that
$$
\E\left(f_{a,b}\left(\sum_{p\leq m_t} a_{pt}p^{-\ii \omega t}\right)\right)\underset{t\to\infty}
{\longrightarrow}\E\left(f_{a,b}(\sigma Y)\right),
$$
with $f_{a,b}(x)=x^a\overline{x}^b$. From (\ref{eqn:Proved}) we know that
$$
\E\left(f_{a,b}\left(\sum_{p\leq m_t} a_{pt}e^{\ii\omega_p} \right)\right)\underset{n\to\infty}
{\longrightarrow}\E\left(f_{a,b}(\sigma Y)\right).
$$
Hence it is sufficient for us to show that, for every $a$ and $b$,
\begin{equation}
\left|\E\left(f_{a,b}\left(\sum_{p\leq m_t} a_{pt}p^{-\ii \omega t}\right)\right)-
\E\left(f_{a,b}\left( \sum_{p\leq m_t} a_{pt}e^{\ii\omega_p} \right)\right)\right|
\underset{n\to\infty}{\longrightarrow}
0.
\end{equation}

Let $n_t=|\mathcal{P}\cap [2,m_t]|$ and, for $z=(z_1,\dots,z_{n_t})\in\RR^{n_t}$,
write $f_{a,b}^{(t)}(z)=f_{a,b}\left(\sum_{p\leq m_t} a_{pt} e^{\ii z_p}\right)$, which is
$\mathscr{C}^\infty$ and $(2\pi\ZZ)^{n_t}$-periodic. Let its Fourier decomposition be
$f_{a,b}^{(t)}(z)=\sum_{k\in\ZZ^{n_t}}u_{a,b}^{(t)}(k)e^{\ii k\cdot z}$. If we write
$T^s$ for the translation on $\RR^{n_t}$ with vector $s\,p^{(t)}=s (\log p_1,\dots,\log p_{n_t})$,
inspired by the proof of Theorem 2.1 we can write
the LHS of the above equation as ($\mu^{(t)}$ is the uniform distribution on the Torus with dimension $n_t$)
$$
\left|
\frac{1}{t}\int_{0}^{t}\dd s f_{a,b}^{(t)}(T^s 0)-\int\mu^{(t)}(\dd z) f_{a,b}^{(t)}(z)
\right|
=
\frac{1}{t}\left|\sum_{k\in\ZZ^{n_t},k\neq 0}u_{a,b}^{(t)}(k)\frac{e^{\ii  t k\cdot p^{(t)}}-1}{k\cdot p^{(t)}}\right|.
$$
Our theorem will be proven if the above difference between a mean in time and a mean in space converges to 0, which
can be seen as an ergodic result.
The above RHS is clearly bounded by
$$\frac{2}{t}\left(\sum_{k\in\ZZ^{n_t}}|u_{a,b}^{(t)}(k)|\right)
\frac{1}{\inf_{k\in \mathcal{H}_{a,b}^{(t)}}
|k\cdot p^{(t)}|},$$
where $\mathcal{H}_{a,b}^{(t)}$ is
the set  of the non-zero $k$'s in $\ZZ^{n_t}$ for which
$u_{a,b}^{(t)}(k)\neq 0$ :
such a $k$ can be written $k^{(1)}-k^{(2)}$, with $k^{(1)}\in \llbracket 1,a\rrbracket^{n_t}$,
$k^{(2)}\in \llbracket 1,b\rrbracket^{n_t}$, $k^{(1)}_1+\dots+k^{(1)}_{n_t}=a$, $k^{(2)}_1+\dots+k^{(2)}_{n_t}=b$.

First note that, as
$\sum_{k\in\ZZ^{n_t}}u_{a,b}^{(t)}(k)e^{\ii k\cdot z}=\left(\sum_{p\leq m_t}a_{pt}e^{\ii z_p}\right)^a
\left(\sum_{p\leq m_t}\overline{a_{pt}}e^{-\ii z_p}\right)^b,$
$$
\sum_{k\in\ZZ^{n_t}}|u_{a,b}^{(t)}(k)|\leq \left(\sum_{p\leq m_t}|a_{pt}|\right)^{a+b}
\leq m_t^{\frac{a+b}{2}}\left(\sum_{p\leq m_t}|a_{pt}|^2\right)^{\frac{a+b}{2}}
$$
hence for sufficiently
large $t$
$$
\left|
\frac{1}{t}\int_{0}^{t}\dd s f_{a,b}^{(t)}(T^s 0)-\int\mu^{(t)}(\dd z) f_{a,b}^{(t)}(z)
\right|
\leq
\frac{2(2\sigma)^{\frac{a+b}{2}}}{t}\frac{m_t^{\frac{a+b}{2}}}{\inf_{k\in \mathcal{H}_{a,b}^{(t)}}
|k\cdot p^{(t)}|}.
$$
Lemma \ref{lem:arith} below and the condition $\log m_t/\log t\to 0$ show that the above term tends to 0, concluding the proof.
\end{proof}

\begin{lem}\label{lem:arith} For $n\geq 1$ and all $k\in\mathcal{H}_{a,b}^{t}$,
$$|k\cdot p^{(t)}|\geq \frac{1}{{n_t}^{2 \max(a,b) }}.$$
\end{lem}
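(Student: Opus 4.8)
The plan is to recognise $k\cdot p^{(t)}$ as the logarithm of a ratio of two distinct positive integers, after which everything is elementary. Given $k\in\mathcal{H}_{a,b}^{(t)}$, write $k=k^{(1)}-k^{(2)}$ as in the definition, so that $k^{(1)},k^{(2)}\in\ZZ_{\geq 0}^{n_t}$ with $k^{(1)}_1+\dots+k^{(1)}_{n_t}=a$ and $k^{(2)}_1+\dots+k^{(2)}_{n_t}=b$. Put
$$
N_1=\prod_{i=1}^{n_t}p_i^{k^{(1)}_i},\qquad N_2=\prod_{i=1}^{n_t}p_i^{k^{(2)}_i},
$$
two positive integers. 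Since $p^{(t)}=(\log p_1,\dots,\log p_{n_t})$,
$$
k\cdot p^{(t)}=\sum_{i=1}^{n_t}\bigl(k^{(1)}_i-k^{(2)}_i\bigr)\log p_i=\log N_1-\log N_2 ,
$$
and because $k\neq 0$ we have $k^{(1)}\neq k^{(2)}$, hence $N_1\neq N_2$ by uniqueness of the prime factorisation --- this is the point at which the $\QQ$-linear independence of the $\log p_i$ is used, in its concrete arithmetic form.

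Next I would bound from below the gap between the logarithms of two distinct positive integers. With $M=\max(N_1,N_2)$, the inequality $\log x\geq 1-1/x$ (valid for $x\geq 1$), applied to $x=M/\min(N_1,N_2)$, gives
$$
\bigl|k\cdot p^{(t)}\bigr|=\log\frac{M}{\min(N_1,N_2)}\geq 1-\frac{\min(N_1,N_2)}{M}=\frac{M-\min(N_1,N_2)}{M}\geq\frac{1}{M},
$$
the last step because $M$ and $\min(N_1,N_2)$ are distinct integers. It remains to estimate $M$: since $p_i\leq p_{n_t}$ for $1\leq i\leq n_t$ and $\sum_i k^{(1)}_i=a$, $\sum_i k^{(2)}_i=b$, one has $N_1\leq p_{n_t}^a$, $N_2\leq p_{n_t}^b$, whence $M\leq p_{n_t}^{\max(a,b)}$.

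Finally I would invoke the elementary estimate $p_n\leq n^2$, valid for all $n\geq 2$ (equivalently, $\pi(x)\geq\sqrt{x}$ for $x\geq 2$); since $n_t\to\infty$ in the application of the lemma there is no loss in assuming $n_t\geq 2$. This gives
$$
\bigl|k\cdot p^{(t)}\bigr|\geq\frac{1}{M}\geq\frac{1}{p_{n_t}^{\max(a,b)}}\geq\frac{1}{\bigl(n_t^{2}\bigr)^{\max(a,b)}}=\frac{1}{n_t^{2\max(a,b)}},
$$
which is the claim. I do not anticipate a genuine obstacle: the only points needing care are the reduction to a ratio of integers and the choice of the crude prime bound $p_n\leq n^2$. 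Together with $n_t\leq m_t$ and the hypothesis $\log m_t/\log t\to 0$ (so that $m_t=t^{\o(1)}$), the lemma is precisely what forces the error term $\frac{1}{t}\,m_t^{(a+b)/2}\bigl(\inf_{k}|k\cdot p^{(t)}|\bigr)^{-1}$ comparing the time average with the space average in the proof of Proposition~\ref{prop:DiacShahNT} to vanish as $t\to\infty$.
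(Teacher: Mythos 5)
Your proof is correct and follows essentially the same route as the paper's: both interpret $k\cdot p^{(t)}$ as $\log(N_1/N_2)$ for distinct positive integers (the paper first cancels common prime factors, writing the positive and negative parts of $k$ as $u_1,u_2$ with $u_1\geq u_2+1$, and uses the mean value theorem where you use $\log x\geq 1-1/x$), then bound the larger integer by $p_{n_t}^{\max(a,b)}\leq n_t^{2\max(a,b)}$. Your explicit restriction to $n_t\geq 2$ for the bound $p_n\leq n^2$ is a careful touch the paper glosses over, and it is harmless in the application since $n_t\to\infty$.
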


\begin{proof}
For $k\in\ZZ^{n_t}$, $k\neq 0$, let $\mathcal{E}_1$ (resp $\mathcal{E}_2$) be the set of indexes $i \in\llbracket 1,n_t\rrbracket$
with $k_i$ strictly positive
(resp strictly negative.) Write $u_1=\prod_{i\in\mathcal{E}_1}p_i^{|k_i|}$
and $u_2=\prod_{i\in\mathcal{E}_2}p_i^{|k_i|}$. Suppose $u_1\geq u_2$. Thanks to the uniqueness of decomposition
as product of primes,
$u_1\geq u_2+1$.
Hence,
\begin{multline*}|k\cdot p^{(t)}|=(u_1-u_2)\frac{\log u_1-\log u_2}{u_1-u_2}\geq (\log'u_1) (u_1-u_2)\\
\geq \frac{1}{u_1}=e^{-\sum_{i\in\mathcal{E}_1}k_i\log p_i}
\geq e^{-\log p_{n_t}\sum_{i\in\mathcal{E}_1}k_i}.
\end{multline*}
For all $n_t\geq 0$, $\log p_{n_t}\leq 2\log {n_t}$. Moreover, from the decomposition $k=k^{(1)}-k^{(2)}$ in
the previous section, we know that $\sum_{i\in\mathcal{E}_1}k_i\leq a$, so
$$
|k\cdot p^{(t)}|\geq e^{-2a \log n_t}.
$$
The case $u_1< u_2$ leads to $|k\cdot p^{(t)}|\geq e^{-2 b \log n_t},$ which completes the proof.
\end{proof}

In the above proof, we showed that the remainder terms ($p>m_t$) converge to 0 in the $L^2$-norm to simplify
a problem of convergence of a sum over primes : this method seems to appear for the first time in Soundararajan \cite{SoundMoments}.

\subsection{Proof of Theorem \ref{thm:jointNT} for \boldmath$\epsilon_t\gg 1/\log t$\unboldmath.}

To prove our equivalent of Theorem \ref{thm:jointNT}, we apply the above Proposition \ref{prop:DiacShahNT} to
the random variable (\ref{eqn:equivalent}), that is to say
$$
a_{pt}=
\frac{1}{\sqrt{-\log\epsilon_t}}
\sum_{j=1}^\ell\frac{\mu_j}{p^{\frac{1}{2}+\epsilon_t+\ii f^{(j)}_t}}
$$
if $p\leq t$, 0 if $p>t$.
Then clearly $\sup_{p}|a_{pt}|\to 0$ as $t\to\infty$. For any sequence  $0<m_t<t$, writing $a=(\sum_{k=1}^\ell|\mu_k|)^2$,
$$
\sum_{m_t<p<t}|a_{pt}|^2\left(1+\frac{p}{t}\right)
\leq
\frac{a}{-\log\epsilon_t}\sum_{m_t<p<t}\frac{1}{p}+\frac{a}{-\log\epsilon_t}.
$$
As $\sum_{p\leq t}\frac{1}{p}\sim\log\log t$, condition (\ref{eqn:condition2NT}) is satisfied
if we can find $m_t=\exp(\log t/b_t)$ with $b_t\to\infty$ and $\frac{\log b_t}{-\log \epsilon_t}\to 0$ :
$b_t=-\log \epsilon_t$ for example.

We now only need to show that
$
\sum_{p\leq t}|a_{pt}|^2\to \sum_{j=1}^\ell|\mu_j|^2+\sum_{s\neq t} \overline{\mu_s}\mu_t(1\wedge c_{s,t}),
$
which is a consequence of the following lemma.

\begin{lem}\label{lem:NT1}
Let $(\Delta_t)$ be bounded and positive.
If $\epsilon_t\to 0$, $\epsilon_t\gg 1/\log t$ and $\log \Delta_t/\log \epsilon_t\to c\in[0,\infty]$, then
$$
\frac{1}{-\log\epsilon_t}\sum_{p\leq
t}\frac{p^{\ii\Delta_t}}{p^{1+2\epsilon_t}}
\underset{t\to\infty}{\longrightarrow} c\wedge 1.
$$
\end{lem}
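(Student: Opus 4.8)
The plan is to reduce this sum over primes to a Dirichlet series in $\sigma=1+2\epsilon_t$ and then analyze its logarithm, exactly in the spirit of Lemma \ref{lem:RMT1}. First I would recall that, by Mertens-type estimates (or partial summation against $\pi(x)$ together with the prime number theorem), the partial sums of $p^{-\sigma}$ relate to $\log\zeta(\sigma)$ up to a bounded error: precisely $\sum_p p^{-s}=\log\zeta(s)+g(s)$ where $g$ is analytic and bounded for $\Re(s)\ge 1/2+\eta$, and $\log\zeta(s)\sim-\log(s-1)$ as $s\to 1^+$. So the full sum $\sum_{p}\frac{p^{\ii\Delta_t}}{p^{1+2\epsilon_t}}$ is, up to $O(1)$, equal to $\log\zeta(1+2\epsilon_t-\ii\Delta_t)$, whose modulus is governed by the pole at $1$: it behaves like $-\log|2\epsilon_t-\ii\Delta_t|=-\log\big(\sqrt{4\epsilon_t^2+\Delta_t^2}\big)$.

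Next I would handle the truncation at $p\le t$. The tail $\sum_{p>t}p^{-1-2\epsilon_t+\ii\Delta_t}$ is bounded in modulus by $\sum_{p>t}p^{-1-2\epsilon_t}$, which by partial summation is $O\big(\int_t^\infty x^{-1-2\epsilon_t}\,\frac{dx}{\log x}\big)=O\big(\frac{1}{2\epsilon_t\log t}\,t^{-2\epsilon_t}\big)=O\big(\frac{1}{\epsilon_t\log t}\big)$. Since $\epsilon_t\gg 1/\log t$, this is $O(1)$, hence negligible after dividing by $-\log\epsilon_t\to\infty$. So it suffices to evaluate $\frac{1}{-\log\epsilon_t}\log\zeta(1+2\epsilon_t-\ii\Delta_t)$, and by the preceding paragraph this is asymptotic to $\frac{-\log\sqrt{4\epsilon_t^2+\Delta_t^2}}{-\log\epsilon_t}=\frac{\log\sqrt{4\epsilon_t^2+\Delta_t^2}}{\log\epsilon_t}$.

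Finally I would do the case analysis on $c$ just as in Lemma \ref{lem:RMT1}. If $c>1$ then $\Delta_t=\o(\epsilon_t)$, so $\sqrt{4\epsilon_t^2+\Delta_t^2}\sim 2\epsilon_t$ and the ratio tends to $1$. If $0<c<1$ then $\epsilon_t=\o(\Delta_t)$, so the expression is $\sim\frac{\log\Delta_t}{\log\epsilon_t}\to c$. If $c=1$, split according to whether $\epsilon_t\ge\Delta_t$ or not; in either case $\sqrt{4\epsilon_t^2+\Delta_t^2}$ is between $\max(\epsilon_t,\Delta_t)$ and a bounded multiple of it, and since $\log\Delta_t\sim\log\epsilon_t$ the limit is $1$. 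If $c=0$, then $\Delta_t\gg\epsilon_t^a$ for every $a>0$ while $\Delta_t$ is bounded, so $\log\sqrt{4\epsilon_t^2+\Delta_t^2}=\o(\log\epsilon_t)$ and the limit is $0$. In every case the limit is $c\wedge 1$, as claimed.

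The main obstacle is making the first paragraph fully rigorous uniformly in $t$: one needs an approximation $\sum_{p\le N}p^{-s}=\log\zeta(s)+O(1)$ with the $O(1)$ uniform for $s=1+2\epsilon_t-\ii\Delta_t$ as $\epsilon_t\to 0$ and $\Delta_t$ bounded, i.e. uniformly as $s$ approaches the segment $\{1+\ii\tau:|\tau|\le c'\}$ of the line $\Re(s)=1$. Since $\zeta$ has no zeros on $\Re(s)=1$ and only the simple pole at $s=1$, $\log\zeta(s)+\log(s-1)$ extends continuously to a neighborhood of that segment, so the required uniform estimate holds; one just has to combine this with a uniform bound on $\sum_{p>N}p^{-s}-\big(\log\zeta(s)-\sum_{p\le N}p^{-s}\big)$ coming from the Euler product tail and $\sum_p\sum_{k\ge 2}k^{-1}p^{-ks}=O(1)$. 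The case $c=\infty$ (where $\Delta_t=\o(\epsilon_t^a)$ for all $a$, so the ratio still tends to $1=c\wedge 1$) is covered by the $c>1$ argument verbatim.
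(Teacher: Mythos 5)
Your proof is correct and follows essentially the same route as the paper: bound the tail $p>t$ using partial summation against $\pi(x)$ and $\epsilon_t\gg 1/\log t$, identify the remaining prime sum with $\log\zeta(1+2\epsilon_t-\ii\Delta_t)$ up to $O(1)$, and read off the limit from the singularity $-\log(s-1)$ via the same case analysis as in Lemma \ref{lem:RMT1}. If anything, your explicit treatment of uniformity when $\Delta_t$ does not tend to $0$ (using the non-vanishing of $\zeta$ on $\Re(s)=1$) is slightly more careful than the paper's appeal to $\zeta(1+x)\sim 1/x$.
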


\begin{proof}
The first step consists in showing that $\frac{1}{-\log\epsilon_t}\sum_{p\leq t}\frac{p^{\ii\Delta_t}}{p^{1+2\epsilon_t}}$
has the same limit as the infinite sum
$\frac{1}{-\log\epsilon_t}\sum_{p\in\mathcal{P}}\frac{p^{\ii\Delta_t}}{p^{1+2\epsilon_t}}$. In fact, a stronger result holds :
as $\epsilon_t$ is sufficiently large ($\epsilon_t>d/\log t$ for some $d>0$),
$\sum_{p> t}\frac{p^{\ii\Delta_t}}{p^{1+2\epsilon_t}}$ is uniformly bounded :
\begin{eqnarray*}
\sum_{p> t}\frac{1}{p^{1+2\epsilon_t}}
&=&\sum_{n> t}\frac{\pi(n)-\pi(n-1)}{n^{1+2\epsilon_t}}\\
&=&\sum_{n> t}\pi(n)\left(\frac{1}{n^{1+2\epsilon_t}}-\frac{1}{(n+1)^{1+2\epsilon_t}}\right)+\o(1)\\
&=&(1+2\epsilon_t)\int_{t}^\infty\frac{\pi(x)}{x^{2+2\epsilon_t}}\dd x+\o(1),
\end{eqnarray*}
and this last term is bounded, for sufficiently large $t$
(remember that $\pi(x)\sim x/\log x$ from the prime number theorem), by
$$2\int_t^\infty\frac{\dd x}{x^{1+\frac{d}{\log t}}\log x}
=-2\int_{0}^{e^{-d}}\frac{\dd y}{\log y}
<
\infty,$$
as shown by the change of variables $y=x^{-d/\log t}$.
Therefore the lemma is equivalent to
$$
\frac{1}{-\log\epsilon_t}\sum_{p\in
\mathcal{P}}\frac{p^{\ii\Delta_t}}{p^{1+2\epsilon_t}}
\underset{t\to\infty}{\longrightarrow} c\wedge 1.
$$
The above term
has the same limit as
$$\frac{1}{\log\epsilon_t}\sum_{p\in
\mathcal{P}}\log\left(1-\frac{p^{\ii\Delta_t}}{p^{1+2\epsilon_t}}\right)=
\frac{1}{-\log\epsilon_t}\log \zeta(1+2\epsilon_t-\ii\Delta_t)
$$
because $\log(1-x)=-x+O(|x|^2)$ as $x\to 0$, and $\sum_{p}1/p^2<\infty$.
The equivalent $\zeta(1+x)\sim 1/x$ ($x\to 0$) and the condition $\log \Delta_t/\log \epsilon_t\to c$
yield the conclusion, exactly as in the end of the proof of Lemma \ref{lem:RMT1}.
\end{proof}

\subsection{Proof of Theorem \ref{thm:jointNT} for \boldmath$\epsilon_t\ll 1/\log t$\unboldmath.}

The equivalent of Theorem \ref{thm:jointNT} now needs to be proven with
$$
a_{pt}=
\frac{1}{\sqrt{\log\log t}}
\sum_{j=1}^\ell\frac{\mu_j}{p^{\frac{1}{2}+\epsilon_t+\ii f^{(j)}_t}}
$$
if $p\leq t$, 0 if $p>t$.
Reasoning as in the previous paragraph, a suitable choice for $(m_t)$ is
$
m_t=\exp(\log t/\log\log t).
$
Therefore, the only remaining condition to check is that, for $(\Delta_t)$ bounded and
strictly positive such that $-\log\Delta_t/\log \log t\to c$ and $\epsilon_t\ll 1/\log t$,
$$
\frac{1}{\log \log t}\sum_{p\leq t}\frac{p^{\ii\Delta_t}}{p^{1+\epsilon_t}}\underset{t\to\infty}{\longrightarrow}c\wedge 1.
$$
First note that we can suppose $\epsilon_t=0$, because (using $\epsilon_t<d/\log t$ for some $d>0$ and once again $|1-e^{-x}|<x$ for $x>0$)
$$
\left|\sum_{p\leq t}\frac{p^{\ii\Delta_t}}{p^{1+\epsilon_t}}-\sum_{p\leq t}\frac{p^{\ii\Delta_t}}{p^{1}}\right||
\leq \sum_{p\leq t}\frac{\epsilon_t\log p}{p}\leq \frac{d}{\log t}\sum_{p<t}\frac{\log p}{p}\to d
$$
where the last limit makes use of the prime number theorem. The
result therefore follows from the lemma below, a strict analogue of
Lemma \ref{lem:RMT2} used in the context of random matrices.

\begin{lem}\label{lem:NT2}
Let $(\Delta_t)$ be bounded and positive, such
that $-\log\Delta_t/\log \log t\to c\in[0,\infty]$. Then
$$
\frac{1}{\log \log t}\sum_{p\leq t}\frac{p^{\ii\Delta_t}}{p}\underset{t\to\infty}{\longrightarrow}c\wedge 1.
$$
\end{lem}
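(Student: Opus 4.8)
The plan is to replace the sum over primes by an integral, using the prime number theorem, and then to analyse that integral exactly as in the proof of Lemma~\ref{lem:RMT2}, with $\log t$ playing the role of $n$.

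The first step is to prove the reduction
\[
\sum_{p\le t}\frac{p^{\ii\Delta_t}}{p}=\int_{\log 2}^{\log t}\frac{e^{\ii\Delta_t v}}{v}\,\dd v+\o(\log\log t).
\]
Writing $\theta(x)=\sum_{p\le x}\log p=x+R(x)$ with $R(x)=\o(x)$ by the prime number theorem, one has $\sum_{p\le t}p^{\ii\Delta_t}/p=\int_{2^-}^t\frac{x^{\ii\Delta_t}}{x\log x}\,\dd\theta(x)$; splitting $\dd\theta(x)=\dd x+\dd R(x)$ and integrating the $\dd R$-part by parts, the boundary contribution is $O(1)$ and, since $\Delta_t$ is bounded (so that $\bigl|\frac{\dd}{\dd x}\bigl(x^{\ii\Delta_t-1}/\log x\bigr)\bigr|\ll 1/(x^2\log x)$), what remains is $\ll\int_2^t\frac{|R(x)|}{x^2\log x}\,\dd x=\o\bigl(\int_2^t\frac{\dd x}{x\log x}\bigr)=\o(\log\log t)$. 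The substitution $x=e^v$ then yields the displayed identity, and the integral $\int_{\log 2}^{\log t}\frac{e^{\ii\Delta_t v}}{v}\,\dd v$ is the exact analogue, under $n\leftrightarrow\log t$, of the integral $\int_{\Delta_n}^{(n+1)\Delta_n}\frac{e^{\ii w}}{w}\,\dd w$ appearing in the proof of Lemma~\ref{lem:RMT2}.

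The second step is to evaluate this integral, following the case distinction of Lemma~\ref{lem:RMT2}. If $c>0$ then $\Delta_t\to 0$, and the change of variable $w=\Delta_t v$ turns the integral into $\int_{\Delta_t\log 2}^{\Delta_t\log t}\frac{e^{\ii w}}{w}\,\dd w$: when $c>1$ (or $c=\infty$) one has $\Delta_t\log t\to 0$, so, since $|e^{\ii w}-1|\le w$, the integral is equivalent to $\int_{\Delta_t\log 2}^{\Delta_t\log t}\frac{\dd w}{w}\sim\log\log t$; when $0<c<1$ one has $\Delta_t\log t\to\infty$, so, using $\sup_{x>1}\bigl|\int_1^x\frac{e^{\ii w}}{w}\,\dd w\bigr|<\infty$, it is equivalent to $\int_{\Delta_t\log 2}^{1}\frac{\dd w}{w}\sim-\log\Delta_t\sim c\log\log t$; and the boundary case $c=1$ follows by separating $\Delta_t\log t\le 1$ from $\Delta_t\log t>1$. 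Dividing by $\log\log t$ gives $c\wedge 1$ in every case. If $c=0$, where $\Delta_t$ need not tend to $0$, I would instead set $G(v)=\int_{\log 2}^v e^{\ii\Delta_t s}\,\dd s=(e^{\ii\Delta_t v}-e^{\ii\Delta_t\log 2})/(\ii\Delta_t)$, so that $|G(v)|\le\min(v,2/\Delta_t)$, integrate by parts to get $\int_{\log 2}^{\log t}\frac{e^{\ii\Delta_t v}}{v}\,\dd v=\frac{G(\log t)}{\log t}+\int_{\log 2}^{\log t}\frac{G(v)}{v^2}\,\dd v$, and split the last integral at $v=\max(\log 2,1/\Delta_t)$, bounding $|G(v)|$ by $v$ to the left of that point and by $2/\Delta_t$ to the right; this shows that the whole expression is $\ll 1+|\log\Delta_t|=\o(\log\log t)$, so the limit is $0=c\wedge 1$.

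The main obstacle is the first step: the passage from the sum over primes to the integral must be carried out with error $\o(\log\log t)$, and this genuinely requires the prime number theorem --- the elementary estimates $\theta(x)\ll x$ and $\sum_{p\le x}\frac{\log p}{p}=\log x+O(1)$ only provide an error term of size $O(\log\log t)$, of the same order as the main term. Once the sum has been replaced by $\int_{\log 2}^{\log t}\frac{e^{\ii\Delta_t v}}{v}\,\dd v$, the remaining analysis is the computation already performed in the proof of Lemma~\ref{lem:RMT2}; the only things to keep track of are that $\Delta_t$ is bounded (so the differentiations in the first step are uniformly controlled) and that $\Delta_t$ need not converge to $0$ when $c=0$.
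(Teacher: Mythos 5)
Your proof is correct and follows essentially the same route as the paper's: the prime number theorem (you via $\theta(x)=x+\o(x)$ and Stieltjes integration by parts, the paper via $\pi(x)\sim x/\log x$ and Abel summation) reduces the sum to $\int\frac{e^{\ii y}}{y}\,\dd y$ over $[\Delta_t\log 2,\Delta_t\log t]$ with an $\o(\log\log t)$ error, after which the case analysis is the one from Lemma \ref{lem:RMT2}. Your separate integration-by-parts treatment of $c=0$ is correct but not needed: since $\Delta_t$ is bounded, the bound $\int_{\Delta_t\log 2}^{1}\frac{|e^{\ii w}-1|}{w}\,\dd w=O(1)$ together with $\sup_{x>1}\bigl|\int_1^x\frac{e^{\ii w}}{w}\,\dd w\bigr|<\infty$ already covers all of $c<1$ uniformly, which is how the paper handles it.
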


\begin{proof}
As calculated in the proof of Lemma \ref{lem:NT1},
$$
\sum_{p\leq t}\frac{p^{\ii\Delta_t}}{p}=\sum_{n\leq t}\frac{n^{\ii\Delta_t}}{n}
(\pi(n)-\pi(n-1))
=(1-\ii\Delta_t)\int_{e}^t\frac{\pi(x)x^{\ii\Delta_t}}{x^2}\dd x +\o(1).
$$
The prime number theorem ($\pi(x)\sim x/\log x$) thus implies
\begin{multline*}
\sum_{p\leq t}\frac{p^{\ii\Delta_t}}{p}=
(1-\ii\Delta_t)\int_e^t\frac{x^{\ii\Delta_t}\dd x}{x\log x}+(1-\ii\Delta_t)\,\o\left(\int_e^t\frac{\dd x}{x\log x}\right)+\o(1)\\
=(1-\ii\Delta_t)\int_{\Delta_t}^{\Delta_t\log t}\frac{e^{\ii y}\dd y}{y}+(1-\ii\Delta_t)\,\o(\log\log t)+\o(1).
\end{multline*}
If $c>1$, $\Delta_t\log t\to 0$, so the above term is equivalent to $\int_{\Delta_t}^{\Delta_t\log t}\dd y/y=\log\log t$. If $c<1$,
$\Delta_t\log t\to\infty$ so, as $\sup_{x>1}\left|\int_1^x\frac{e^{\ii y}}{y}\dd y\right|<\infty$,
$\frac{1}{\log \log t}\sum_{p\leq t}\frac{p^{\ii\Delta_t}}{p}$ tends to the same limit as
$\int_{\Delta_t}^{1}\dd y/y=\log \Delta_t/\log\log t\to c$.
Finally, if $c=1$, the distinction between the cases $\Delta_t\log t>1$ and $\Delta_t\log t<1$ and the above reasoning give 1 in the limit.
\end{proof}

\section{Connection with spatial branching
processes.}\label{Section:SpatialBranchingProcesses}

There is no easy a priori reason why the matrix
(\ref{eqn:covariance}) is a covariance matrix.
More precisely, given positive numbers $c_1,\dots,c_{\ell-1}$,
is there a reason why the symmetric matrix
$$
C_{i,j}=\E(\overline{Y_i}Y_j)=\left\{
\begin{array}{ccc}
1&\mbox{if}&i=j\\
1\wedge \inf_{\llbracket i,j-1\rrbracket}c_k&\mbox{if}&i<j
\end{array}
\right.
$$
is positive
semi-definite ?
This is a by-product of Theorem \ref{thm:jointNT},
and a possible construction for the Gaussian vector
$(Y_1,\dots,Y_\ell)$ is as follows. Define the angles $\phi_n^{(k)}$,
$1\leq k\leq\ell$, by $\phi_n^{(1)}=0$ and
\begin{equation}\label{eqn:covar}
\phi_n^{(k)}=\phi_n^{(k-1)}+\frac{1}{n^{c_{k-1,k}}},\ 2\leq k\leq
\ell.
\end{equation}
Let $(\mathcal{X}_r)_{r\geq 1}$ be independent standard complex
Gaussian variables. For $1\leq k\leq\ell$, let
$$
Y^{(n)}_k=\frac{1}{\sqrt{\log n}}\sum_{r=1}^n e^{\ii r
\phi_n^{(k)}}\frac{\mathcal{X}_r}{\sqrt{r}}.
$$
Then $(Y^{(n)}_1,\dots,Y^{(n)}_\ell)$ is a complex Gaussian vector,
and Lemma \ref{lem:RMT2} implies that its covariance matrix
converges to (\ref{eqn:covar}).\\

Instead of finding a Gaussian vector with covariance structure (\ref{eqn:covar}),
we consider this problem :
given $c_1,\dots,c_\ell$ positive real numbers,
can we find a centered (real or complex) Gaussian vector $(X_1,\dots, X_\ell)$ with
\begin{equation}\label{eqn:discrete}
\E(X_iX_j)=\inf_{i\leq k\leq j}c_k
\end{equation}
for all $i\leq j$ ?
A matrix $C$ of type (\ref{eqn:covar}) can always be obtained as a $\lambda C'+D$ with $\lambda>0$,
$C'$ of type (\ref{eqn:discrete}) and $D$ diagonal with positive entries, so the above problem is more general than
the original one.

 Equation (\ref{eqn:discrete}) is the discrete analogue of the following problem,
considered in the context of spatial branching processes by Le Gall (see e.g. \cite{leGall}).
Strictly following his work, we note $e:[0,\sigma]\to \RR^+$ a continuous function such that $e(0)=e(\sigma)=0$. Le Gall associates to
such a function $e$ a continuous tree by the following construction :
each $s\in[0,\sigma]$ corresponds to a vertex of the tree after identification of $s$ and $t$ ($s\sim t$) if
$$e(s)=e(t)=\inf_{[s,t]} e(r).$$
This set $[0,\sigma]\slash \sim$ of vertices is endowed with the partial order $s\prec t$ ($s$ is an ancestor of $t$)
if
$$e(s)=\inf_{[s,t]}e(r).$$
Independent Brownian motions can diffuse on the distinct branches of the tree : this
defines a Gaussian process $B_u$ with $u\in[0,\sigma]\slash \sim$ (see \cite{leGall} for the construction of this diffusion).
For $s\in[0,\sigma]$ writing $X_s=B_{\overline{s}}$ (where $\overline{s}$ is the equivalence class of $s$ for $\sim$), we get a
continuous centered Gaussian process on $[0,\sigma]$ with correlation structure
\begin{equation}\label{eqn:continuous}
\E(\overline{X_s} X_t)=\inf_{[s,t]}e(u),
\end{equation}
which is the continuous analogue of (\ref{eqn:discrete}).
This construction by Le Gall yields a solution of our discrete problem (\ref{eqn:discrete}).
More precisely,
suppose for simplicity that all the $c_i$'s are distinct (this is not a restrictive hypothesis by a continuity argument),
and consider the graph $i\mapsto c_i$. We say that $i$ is an ancestor of $j$ if
$$
c_i=\inf_{k\in \llbracket i, j\rrbracket}c_k.
$$

\begin{wrapfigure}[15]{r}{.5\linewidth}
\psset{unit=0.7cm, linewidth=1pt}
\begin{center}
\begin{pspicture}(-1,-1)(8,7)

\psline{->}(0,0)(7,0)
\psline{->}(0,0)(0,7)

\psline{-*}(5,0)(5,1)
\psline{-*}(1,1)(1,2)
\psline{-*}(3,2)(3,4)
\psline{-*}(2,4)(2,5)
\psline{-*}(4,4)(4,7)
\psline{-*}(7,1)(7,3)
\psline{-*}(6,3)(6,6)

\psline[linestyle=dashed](1,1)(7,1)
\psline[linestyle=dashed](1,2)(3,2)
\psline[linestyle=dashed](6,3)(7,3)
\psline[linestyle=dashed](2,4)(4,4)

\rput(7.5,0){$k$}
\rput(0,7.5){$c_k$}

\rput(0.5,1.5){$\mathcal{N}_1$}
\rput(1.5,4.5){$\mathcal{N}_2$}
\rput(2.5,3){$\mathcal{N}_3$}
\rput(3.5,5.5){$\mathcal{N}_4$}
\rput(4.5,0.5){$\mathcal{N}_5$}
\rput(5.5,4.5){$\mathcal{N}_6$}
\rput(6.5,2){$\mathcal{N}_7$}


\end{pspicture}
\end{center}
\end{wrapfigure}

The father of $i$ is its nearest ancestor, for the distance $d(i,j)=|c_i-c_j|$. It is noted $p(i)$. We can write
$c_{\sigma(1)}<\dots<c_{\sigma(\ell)}$ for some permutation $\sigma$, and $(\mathcal{N}_1,\dots,\mathcal{N}_\ell)$ a vector of independent
centered complex Gaussian variables, $\mathcal{N}_k$ with variance $c_k-c_{p(k)}$ (by convention $c_{p(\sigma(1))}=0$).
Then the Gaussian vector $(X_1,\dots,X_\ell)$ iteratively defined by
$$
\left\{
\begin{array}{ccl}
X_{\sigma(1)}&=&\mathcal{N}_{\sigma(1)}\\
X_{\sigma(i+1)}&=&X_{p(\sigma(i+1))}+\mathcal{N}_{\sigma(i+1)}
\end{array}
\right.
$$
satisfies (\ref{eqn:discrete}), by construction.

\renewcommand{\refname}{References}


\begin{thebibliography}{99}


\bibitem{CD} M. Coram, P. Diaconis, New tests of the correspondence between unitary eigenvalues and the zeros of Riemann's zeta function,
J. Phys. A: Math. Gen. 36 2883-2906.

\bibitem{DE} P. Diaconis and S. Evans, Linear Functionals of Eigenvalues of Random
Matrices, Trans of the AMS, Vol. 353, No. 7 (Jul., 2001), pp.
2615-2633

\bibitem{DiacShah} P. Diaconis, M. Shahshahani, On the eigenvalues
of random matrices, Studies in applied probability,  J. Appl.
Probab.  31A  (1994), 49-62.

\bibitem{Fujii} Hejhal D, Friedman J, Gutzwiller M and Odlyzko A (ed) 1999
Emerging Applications of Number Theory (Berlin: Springer)

\bibitem{leGall} J.F. le gall, Spatial Branching Processes, Random Snakes and Partial Differential Equations, Lectures in Mathematics, ETH
Zürich, Birkhäuser, 1999.

\bibitem{Goldston} D. A. Goldston, Notes on pair correlation of zeros and prime numbers, in Recent Perspectives in
Random Matrix Theory and Number Theory, London Mathematical Society Lecture,
Note Series 322 (CUP), (2005), edited by F. Mezzadri and  N.C. Snaith.

\bibitem{HKO} C. P. Hughes, J. P. Keating and N. O'Connell, On the characteristic polynomial of a random unitary
matrix, Comm. Math. Phys. 220, n°2, p 429-451, 2001.

\bibitem{HNY} C. P. Hughes, A. Nikeghbali, M. Yor, Probability Theory and Related Fields 141 (2008) 47-59.

\bibitem{Joyner} W. D. Joyner, Distribution theorems of L-functions , Pitman Research Notes, vol 142, 1986.

\bibitem{MontgomeryVaughan} H. L. Montgomery, R. C. Vaughan, Hilbert's inequality, J. London Math. Soc. (2), 8 (1974), 73-82.

\bibitem{Petrov}  V.V. Petrov, Limit Theorems of Probability Theory, Oxford University Press, Oxford, 1995.

\bibitem{SelbergCLTOriginalPaper} A. Selberg, Contributions to the theory of the Riemann zeta-function,
Arkiv for Mathematik og Naturvidenskab B. 48 (1946), 5, 89-155.

\bibitem{SoundMoments} K. Soundararajan, Moments of the Riemann zeta-function, to appear in Ann. Math.

\bibitem{Tit}  E.C. Titschmarsh, The Theory of the Riemann Zeta Function, London, Oxford
Unversity Press,  1951.

\bibitem{Tsang} Kai-man Tsang, The distribution of the values of
the zeta function, Thesis, Princeton University, October 1984, 179
pp.

\bibitem{Wieand} K.L. Wieand, Eigenvalue distributions of random matrices in the permutation group
and compact Lie groups, Ph.D. thesis, Harvard University, 1998.

\end{thebibliography}
\end{document}